\renewcommand\p@subfigure{\thefigure~}
\newtheorem{lemma}{Lemma}
\newtheorem{definition}{Definition}
\newtheorem{theorem}{Theorem}
\newtheorem{remark}{Remark}
\newtheorem{example}{Example}
\title{The locally adapted parametric finite element method for interface problems on triangular
meshes}
\author[1]{Johan Hoffman\thanks{\texttt{jhoffman@kth.se}}}
\affil[1]{\small{Department of Computational Science and Technology, School of Computer Science and
Communication, KTH Royal Institute of Technology, Sweden}}
\author[1]{B\"arbel Holm\thanks{\texttt{barbel@kth.se} corresponding author}}
\author[2]{Thomas Richter\thanks{\texttt{richter@math.fau.de}}}
\affil[2]{\small{Universit\"at Magdeburg,
Fakult\"at f\"ur Mathematik,
Institut f\"ur Analysis und Numerik,
Universit\"atsplatz 2,
39106 Magdeburg, Germany}}
\begin{document}
\maketitle
\begin{abstract}
We present a locally adapted parametric finite element method for interface problems. For this
adapted finite element method we show optimal
convergence for elliptic interface problems with a discontinuous diffusion parameter. The
method is based on the adaption of macro elements where a local basis represents the interface. The
macro elements are independent of the interface and can be cut by the interface. A
macro element which is a triangle in the triangulation is divided into four subtriangles. On these
subtriangles, the basis functions of the macro element are interpreted as linear functions. The
position of the vertices of these subtriangles is determined by the location of the interface in the
case a macro element is cut by the interface. Quadrature is performed on the subtriangles via
transformations to a reference element. Due to the locality of the method, its use is well suited on
distributed architectures.
\end{abstract}






\section{Introduction}
\label{intro}
In this paper we assume the domain
$\Omega \subset \mathds{R}^2$ to be partitioned into two non-overlapping parts $\Omega = \Omega_1
\cup \Gamma \cup
\Omega_2$, and with the intersection $\Gamma \coloneqq \partial\Omega_1\cap\partial\Omega_2$ we denote the interface
between $\Omega_1$ and $\Omega_2$.
We consider the problem 
\begin{equation}
  -\nabla \cdot (\kappa_i\nabla u) = f \text{ in } \Omega_i \subset\mathds{R}^2,\ i= 1,2, \quad [u] = 0,\quad
[\kappa\partial_nu] = 0 \text{ on }\Gamma,
\label{laplace}
\end{equation}
with positive diffusion parameters $\kappa_i>0$ defined on $\Omega_i, i = 1, 2,$
where
\[ [u](x) \coloneqq \lim_{s\downarrow 0} u(x+sn) - \lim_{s\uparrow 0} u(x+sn), \quad x \in
\Gamma \]
denotes the jump at the interface, with $n$ a normal vector of $\Gamma$.
For a domain $\Omega \in \mathds{R}^2$ we denote by $H^k(\Omega)$ the Sobolev
space of integer order $k\ge 0$ with norm $\norm{\cdot}_{H^k(\Omega)}$ and seminorm
$\abs{\cdot}_{H^k(\Omega)}$ involving only the highest derivatives. Throughout the paper, we shall use the notation of
the inner product on $L^2(\Omega) = H^0(\Omega)$ given by
\[
(u,\varphi)_{L^2(\Omega)} = (u,\varphi)_{\Omega} =  (u,\varphi) = \int_{\Omega} u\varphi \dif x,
\]
and the norm
\[
\|u\|_{L^2(\Omega)}=
\|u\|_{\Omega}=
\|u\|=
\left(\int_\Omega|u|^2\dif x\right)^{\nicefrac{1}{2}},
\]
induced by this inner product.
We assume that both subdomains $\Omega_1$ and $\Omega_2$ have a boundary with sufficient regularity
such that for smooth right hand sides, the solution has the regularity
\[
u \in H^1_0(\Omega) \cap H^{r+1}(\Omega_1\cup\Omega_2),
\]
for a given $r \in \mathds{N}$, see~\cite{Babuška1970}.
The variational formulation of this interface problem reads
\begin{equation}
  \text{Find }u\in H^1_0(\Omega): \quad a(u,\varphi) \coloneqq \sum_{i = 1}^2 (\kappa_i\nabla u, \nabla
\varphi)_{\Omega_i} =
(f,\varphi)_{\Omega}\quad \text{for all }\varphi \in H^1_0(\Omega).\label{contproblem}
\end{equation}
By standard arguments, the existence of solutions follows.
The error between an analytical solution $u$ and an approximation $u_h$ by a standard finite element
method with linear or higher order basis functions which do not conform to the interface will be bounded
by 
\[
\norm{\nabla (u - u_h)}_\Omega = O(h^{\nicefrac12}),
\]
see~\cite{Babuška1970}, \cite{Mackinnon}, and Figure~\ref{conv}, which shows the error using
standard finite elements for the
numerical example presented in Section~\ref{circular_interface}.

\begin{figure}[h]
  \centering
  \resizebox{\textwidth}{!}{
  \input{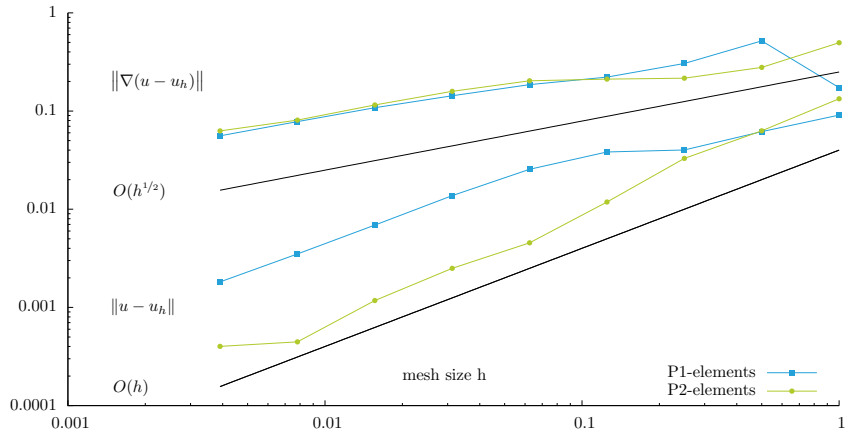}
  }
\caption{Convergence for standard finite element methods for interface problem~\eqref{laplace}. Solid lines indicate
a slope of $h^{\nicefrac12}$ and $h$, respectively.}\label{conv}
\end{figure}
\noindent
To recover the optimal order of convergence for interface problems various techniques have been
proposed, including so called unfitted finite element 
methods which locally modify or enrich the finite element basis. Examples are the extended finite element
method (XFEM)~\cite{Moes99}, the generalized finite element method~\cite{Babuska04}, and the
unfitted Nitsche method~\cite{Hansbo02,Hansbo04}. These methods locally
modify the finite element basis. Thus, the connectivity of the system matrix is changed and
degrees of freedom are added or removed. In view of distributed parallel algorithms, this would lead to
costly load balancing.
\noindent
Recent work also includes cut finite element methods (CutFEM)~\cite{Burman15,Burman2015}. This
approach uses Nitsche's method and stabilization of the finite element method on facets close to the
interface. 
\noindent
In~\cite{FreiRichter14}, a locally adapted (patch) finite element method is proposed for
quadrilateral elements, where the mesh is locally adapted to align with the interface.
A similar method to the one we present here is described in~\cite{Gangl2016}. In contrast to their
approach, we use rectangular triangles as reference triangles and thus, get a slightly different
bound for the
maximum angle condition. 
In this paper, we describe the locally adapted finite element method for interface problems and prove a priori error estimates which are verified in numerical experiments.

The outline of this paper is as follows. In Section~\ref{fem}, we introduce the method and describe
the finite element spaces on cut cells. In order to prove optimal a priori error estimates for the locally
adapted finite element patch method, we show in Section~\ref{maxangle} that for a certain choice of free parameters in a cell cut by the interface, a maximum angle condition is satisfied.
In Section~\ref{error}, we prove optimal a priori error bounds for the locally adapted patch finite element
method for interface problems, followed by numerical examples in Section~\ref{numerics}. We conclude
this paper with Section~\ref{conclusion}.  

\section{The locally adapted patch finite element method}
\label{fem}

Let $\mathcal{T}_h$ be a shape-regular triangulation of a domain $\Omega \subset \mathds{R}^2$ into triangles. Since we
do not require that the triangulation is aligned with the interface, triangles $T \in
\mathcal{T}_h$ can be cut by the interface. On these triangles we get contributions from both
subproblems. To integrate those
contributions we propose a locally adapted finite element method on patches of subtriangles.
Note that the triangulation does not necessarily coincide with the subdivision of the domain $\Omega =
\Omega_1\cup\Gamma\cup\Omega_2$.
We assume that the triangulation has a patch structure such that each triangle $T\in\mathcal{T}_h$ is
divided into four smaller subtriangles $T_0,T_1,T_2,T_3$, see Figures~\ref{refA} and~\ref{refB} for the two
different reference configurations of a triangle $T \in \mathcal{T}_h$. By a linear
transformation, the vertices close to the cut by the interface are
mapped to the exact location of the cut. 
We will now construct a finite element method for a mesh $\mathcal{T}_h$ of locally transformed
triangles $T$ cut by the interface.

\subsection{Definition of the finite element space on cut triangles}
Before defining the finite element space on cut triangles we define when we consider a patch
triangle to be cut. We allow two possible configurations which
are:
\begin{enumerate}
  \item Each (open) patch $T \in \mathcal{T}_h$ is not cut, such that $T \cap \Gamma = \emptyset$
    holds or
  \item a patch $T \in \mathcal{T}_h$ is cut in exactly two points on its boundary such that  
    $T \cap \Gamma \ne \emptyset$ and $\partial T \cap \Gamma = \{x_1^T, x_2^T\}$.
\end{enumerate}
If the interface cuts through two vertices of a patch, we do not consider the patch cut.
We restrict our method such that if a patch is cut, the two points
$\{x_1^T,x_2^T\}$ may not be inner points of the same edge.
That means we do not allow a patch to be cut multiple times and the interface may not enter and
leave the patch at the same edge. Using refinement of the underlying mesh, these restrictions can be avoided.
The finite element space $V_h\subset H_0^1(\Omega)$ is defined as an isoparametric space on the
triangulation $\mathcal{T}_h$ given as
\[
V_h \coloneqq \Big\{\varphi \in C(\bar{\Omega}),\ \varphi \circ F^{-1}_{T_i} \Bigm\vert_{T_i} \in \hat{P}
\text{ for } i = 0,\ldots, 3, \text{ and all patches } T \in
\mathcal{T}_h\Big\},
\]
where $F_{T_i}$ is the mapping between the reference patch $\hat{T}$ and every patch $T\in
\mathcal{T}_h$ such that 
\[
F_{T_i}(\hat{x}_k) = x_k,\quad k = 1,\ldots,6, \quad i = 0, \ldots, 3,
\]
for the six nodes $x_1^T,\ldots,x_6^T$ in every patch.
We choose the reference space $\hat{P}$ of polynomials as the standard
space of piecewise linear functions which is given as
\[
\hat{P} = 
\Big\{\varphi \in C(\bar{T}),\ \varphi \Bigm\vert_{T_i} \in \operatorname{span}\{1,x,y\},\ 
T_i \in T, i = 0, \ldots, 3\Big\}.
\]
With $\{\hat{\varphi}_1, \ldots,\hat{\varphi}_6\}$ we denote the standard Lagrange basis of
$\hat{P}$ for which $\hat{\varphi}_i(\hat{x}_j^T) = \delta_{ij}$ holds. Accordingly, the transformation
$F_{T_i}$ is given by
\[
F_{T_i}(\hat{x}) = \sum_{k=1}^6 x_k^T\hat{\varphi}_k(\hat{x}), \quad i = 0, \ldots, 3.
\]
In the following, we will describe the transformation in more detail.
\begin{figure}[h]
  \centering
  \begin{subfigure}[b]{.48\textwidth}
  \centering
  \resizebox{.7\textwidth}{!}{\input{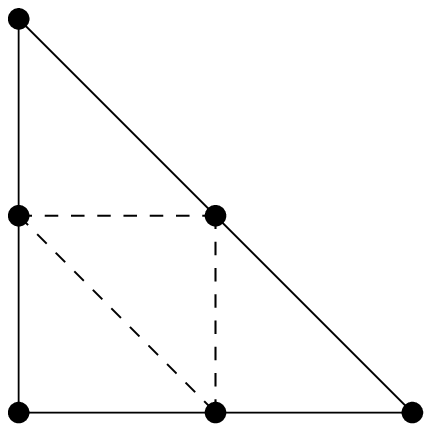}}
  \caption{Reference configuration A.}
  \label{refA}
\end{subfigure}%
  \begin{subfigure}[b]{.48\textwidth}
  \centering
   \resizebox{.7\textwidth}{!}{\input{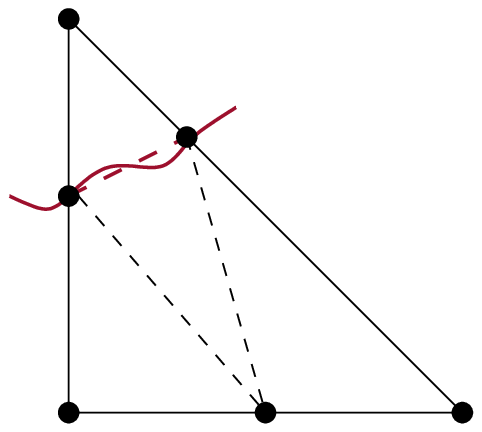}}
   \caption{Cut through two edges.}
   \label{cutedge}
\end{subfigure}%
\caption{Cut through two edges, reference configuration~\subref{refA} and actual, adapted
configuration~\subref{cutedge}, interface in solid red line, linear approximation of the interface
in dashed red line.}\label{cuttwoedges}
\end{figure}

\begin{figure}[h]
  \centering
  \begin{subfigure}[b]{.48\textwidth}
  \centering
  \resizebox{.7\textwidth}{!}{\input{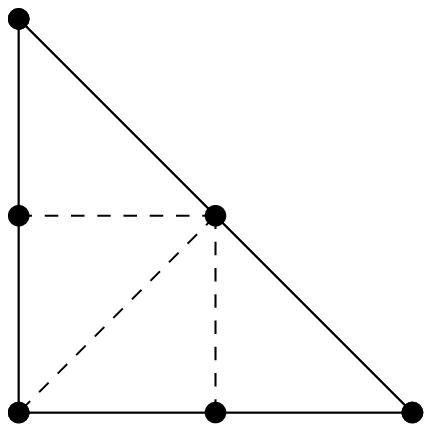}}
  \caption{Reference configuration B.}
  \label{refB}
\end{subfigure}%
  \begin{subfigure}[b]{.48\textwidth}
  \centering
   \resizebox{.7\textwidth}{!}{\input{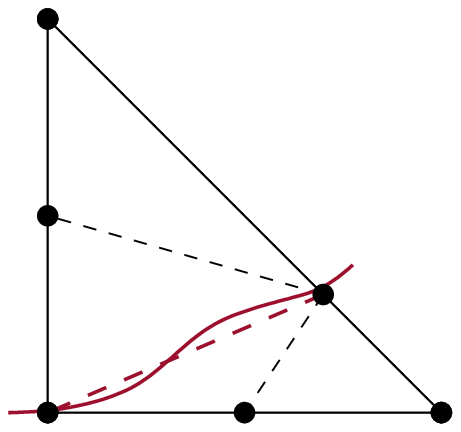}}
   \caption{Cut through a vertex and an edge.}
   \label{cutvertexedge}
\end{subfigure}%
\caption{Cut through lower left vertex and the opposite edge, reference
configuration~\subref{refB} and
actual, adapted configuration~\subref{cutvertexedge}, interface in solid red line, linear approximation of the interface
in dashed red line.}\label{cutlowerleftvertex}
\end{figure}

\begin{figure}[h]
  \centering
  \begin{subfigure}[b]{.48\textwidth}
  \centering
  \resizebox{.7\textwidth}{!}{\input{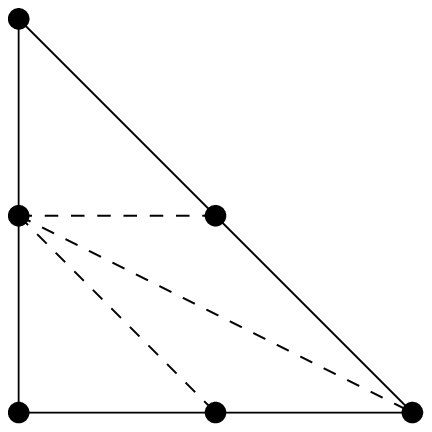}}
  \caption{Reference configuration C.}
  \label{refC}
\end{subfigure}%
  \begin{subfigure}[b]{.48\textwidth}
  \centering
   \resizebox{.7\textwidth}{!}{\input{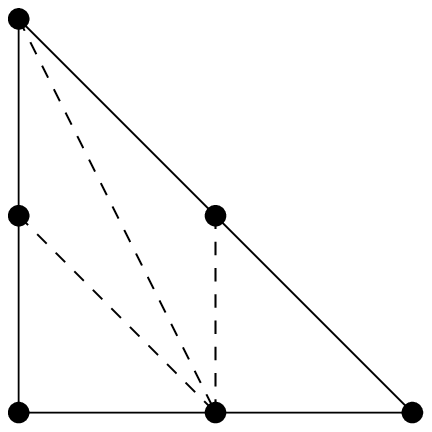}}
   \caption{Reference configuration D.}
   \label{refD}
\end{subfigure}%
\caption{Reference configuration for the cut of the right vertex and the opposite
edge~\subref{refC} and
for the cut of the upper left vertex and the opposite edge~\subref{refD}.}
\end{figure}

\subsection{Quadrature on cut triangles}
To define the quadrature rules on the cells which are cut by the interface, we introduce the four
reference configurations that we use.
\begin{description}
  \item[Configuration A] The cell is cut through exactly two edges, see Figure~\ref{cuttwoedges} for
   both, the reference and an actual configuration.
  \item[Configuration B] The cell is cut through the lower left vertex and the opposite edge, see
    Figure~\ref{cutlowerleftvertex} for
   both, the reference and an actual configuration.
  \item[Configuration C] The cell is cut through the right vertex and the opposite edge, see
    Figure~\ref{refC} for the reference configuration.
  \item[Configuration D] The cell is cut through the upper left vertex and the opposite edge, see
    Figure~\ref{refD} for the reference configuration.
  \end{description}
Since the finite element method is defined using standard polynomial basis functions, we also use
standard quadrature rules on the triangles $T_0, \ldots, T_3 \in T \in \mathcal{T}_h$.
For the quadrature on a patch $T\in \mathcal{T}_h$, the quadrature rule is composed by a combination
of standard rules on
all triangles $T_0, \ldots, T_3$, as it is sketched in Figure~\ref{quadrature}.
\begin{figure}[h]
  \centering
   \resizebox{1.\textwidth}{!}{\input{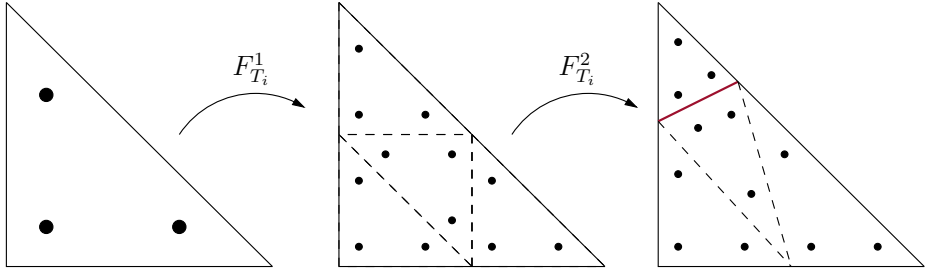}}
   \caption{Quadrature rule on a patch $T$ as a composition of quadrature rules on each triangle $T_0, \ldots, T_3$.}\label{quadrature}
\end{figure}
We start from a quadrature rule for triangles on the reference triangle (Figure~\ref{quadrature} on
the left). This can be any quadrature rule suitable for the integration of linear polynomials on
the reference triangle. Then, for each triangle $\hat{T}_i$, the quadrature points are mapped via the
transformation $F_{T_i}^1$ to each of the subtriangles of one of the reference configurations A --
D. With $F_{T_i}^2$, these quadrature points then are mapped to their location in real coordinates. Thus, the
transformation $F_{T_i}$ can be decomposed into 
\[
F_{T_i} = F_{T_i}^2\circ F_{T_i}^1.
\]
The quadrature weights are scaled appropriately.
\begin{example}
  Let us assume that the triangle with vertices $v_1 = (0,0),\ v_2 = (1,0)$, and $v_3 = (0,1)$ is cut
  with the parameters $q = \nicefrac{9}{16},\ s = \nicefrac12$, and $r = \nicefrac{11}{16}$ and
  further let the basis quadrature rule for integrating linear polynomials have the quadrature
  points
  \[
  \hat{q}_1 = (\nicefrac23, \nicefrac16), \quad \hat{q}_2 = (\nicefrac16, \nicefrac16),\quad  \hat{q}_3 =
  (\nicefrac16, \nicefrac23),
  \]
  and the weights $\hat{\omega}_i = \nicefrac16$ for $i = 1,2,3$.
  Then, the corresponding quadrature points for a triangle based on reference configuration A are given as
  \begin{align*}
    F_{T_0}\hat{q}_1 &= (\nicefrac13,\nicefrac{3}{32}), \quad &F_{T_0}\hat{q}_2
    &= (\nicefrac{1}{12}, \nicefrac{3}{32}), \quad &
    F_{T_0}\hat{q}_3 &= (\nicefrac{1}{12}, \nicefrac{3}{8}),\\
    F_{T_1}\hat{q}_1 &= (\nicefrac{77}{96},\nicefrac{11}{96}), \quad &F_{T_1}\hat{q}_2 &= (\nicefrac{53}{96},
    \nicefrac{11}{96}),
    \quad &F_{T_1}\hat{q}_3 &= (\nicefrac{11}{24}, \nicefrac{11}{24}),\\
    F_{T_2}\hat{q}_1 &= (\nicefrac{5}{24},\nicefrac{23}{32}), \quad &F_{T_2}\hat{q}_2 &= (\nicefrac{5}{96},
    \nicefrac{21}{32}),
    \quad &F_{T_2}\hat{q}_3 &= (\nicefrac{5}{96}, \nicefrac{7}{8}),\\
    F_{T_3}\hat{q}_1 &= (\nicefrac{13}{96},\nicefrac{47}{96}), \quad &F_{T_3}\hat{q}_2 &=
    (\nicefrac{7}{24}, \nicefrac{53}{96}),
    \quad &F_{T_3}\hat{q}_3 &= (\nicefrac{37}{96}, \nicefrac{5}{24}),\\
\end{align*}
and the weights are scaled such that 
\[
\omega_i = \nicefrac{1}{24}, \quad i = 1,2,3, \text{ for all } T_0,\ldots,T_3.
\]
\end{example}

\subsection{Discrete variational formulation}
With the definition of the discrete finite element space, we are ready to formulate the discrete counterpart of
problem~\eqref{contproblem} as
\begin{equation}
  \text{Find }u_h\in V_h: \quad a_h(u_h,\varphi_h) \coloneqq \sum_{i = 1}^2 (\kappa_i\nabla u_h, \nabla
\varphi_h)_{\mathcal{T}_{i,h}} =
(f,\varphi_h)_{\Omega}\quad \text{for all }\varphi_h \in V_h.\label{discproblem}
\end{equation}
Note that we do not have the standard Galerkin orthogonality property due to the fact that the value of $\kappa$ differs
in a small layer between the continuous interface $\Gamma$ and the linear approximation $\Gamma_h$,
as is shown in Figure~\ref{subdommesh}.
\begin{figure}[h]
\centering
\begin{subfigure}[b]{.45\textwidth}
  \centering
   \resizebox{.8\textwidth}{!}{\input{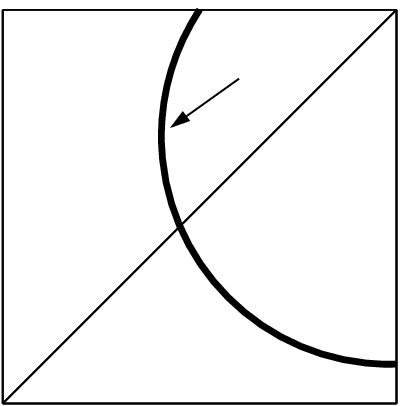}}
 \end{subfigure}%
\begin{subfigure}[b]{.45\textwidth}
  \centering
   \resizebox{.8\textwidth}{!}{\input{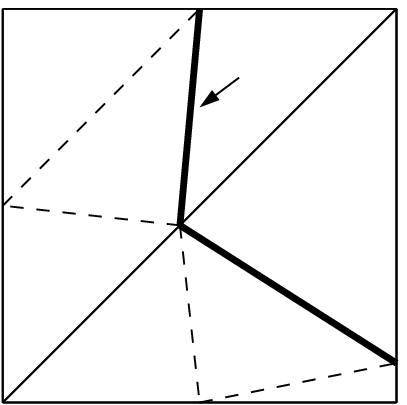}}
 \end{subfigure}%
 \caption{Splitting into subdomains and subtriangulations}\label{subdommesh}
\end{figure}

\section{Maximum angle condition}\label{maxangle}
In order to prove the optimal order of convergence for the locally adapted patch finite element method,
we need the Lagrangian interpolation operator
\begin{equation*}
  L_h \colon H^2(T)\cap C(\bar{T}) \to V_h
\end{equation*}
to satisfy
\begin{equation}
\norm{\nabla^k(v-L_hv)}_T \le ch_{T,\max}^{2-k}\norm{\nabla^2v}_T,\label{interpolation}
\end{equation}
where $c>0$ is a constant and $h_{T,\max}$ is the maximum diameter of a triangle $T$.
In~\cite{Apel1999} it is shown that a necessary condition for the above estimate to hold is the
maximum angle condition.\\
\begin{definition}[Maximum angle condition]
There is a constant $\gamma_* < \pi$, independent of $h$ and $T_i \in \mathcal{T}_h$ such that the
maximal interior angle $\gamma$ of any element $T_i$ is bounded by $\gamma_*$:
\[
\gamma \le \gamma_* <\pi.
\]
\end{definition}
\begin{figure}[h]
  \centering
  \begin{subfigure}[b]{.45\textwidth}
  \centering
    \resizebox{.7\textwidth}{!}{\input{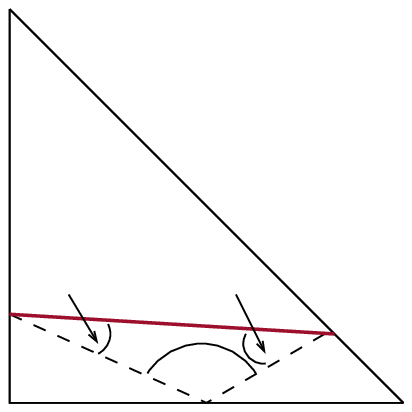}}
    \caption{Two edges are cut and if $q\to 0$ and $r\to 0$ then $\gamma \to \pi$.}
\label{twofacets}
  \end{subfigure}%
\hspace{5mm}
  \begin{subfigure}[b]{.45\textwidth}
  \centering
    \resizebox{.7\textwidth}{!}{\input{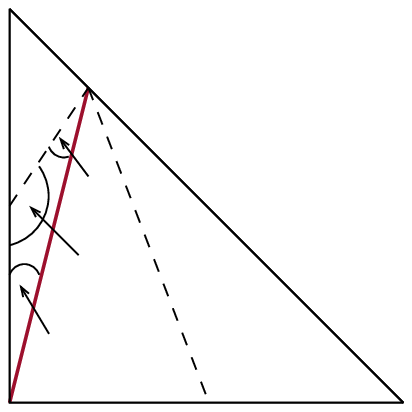}}
    \caption{A vertex and an edge are cut and if $r\to 1$ then $\alpha_1 \to \pi$.}
    \label{vertexfacet}
  \end{subfigure}%
  \caption{Illustration of two cases in which one of the inner triangles becomes anisotropic and the
  maximum angle condition does not hold}\label{bigangles}
\end{figure}
In contrast to the corresponding finite element method on quadrilateral cells where the maximal angle
condition is fulfilled by construction, see ~\cite{FreiRichter14}, the version of this method on triangles 
lacks this property.

There are only two possible configurations of how the interface can cut a triangle; either the
interface cuts two edges (Figure~\ref{cutedge}) or the interface cuts one vertex and one edge, see
Figure~\ref{cutvertexedge}.
For each of these configurations, there are cases in which the maximum angle condition is not fulfilled as shown in
Figure~\ref{bigangles}.
Since the cut of the interface through a cell determines one of the three parameters $s, r$ and
$q$ in the case that the interface goes through a vertex and two parameters if the cut
goes through two edges of a cell, it leaves at least one parameter per cell free to choose. In the following we discuss
different possibilities to choose the free parameters and we show that by those choices the maximum angle condition
will be satisfied. First, we treat the case in which only edges are cut. Later, the case of a cut
through a vertex and an edge is discussed. 
For the estimation of the angles, we use the notation introduced in Figure~\ref{angles}.
  \begin{figure}[h]
    \centering
    \begin{subfigure}[b]{.3\textwidth}
    \centering
    \resizebox{1.\textwidth}{!}{
    \input{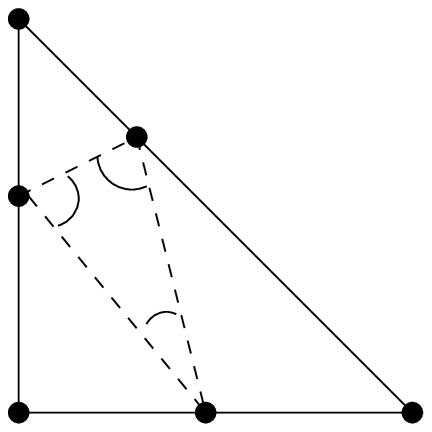}
    }
    \caption{Cut through two edges\phantom{Cut through lower right vertex and an edge}}
  \end{subfigure}%
\hspace{5mm}
    \begin{subfigure}[b]{.3\textwidth}
    \centering
    \resizebox{1.\textwidth}{!}{
    \input{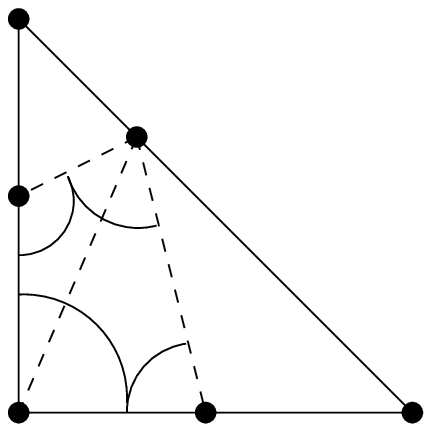}
    }
    \caption{Cut through lower left vertex and an edge}\label{cutleftvertex}
  \end{subfigure}%
\hspace{5mm}
    \begin{subfigure}[b]{.3\textwidth}
    \centering
    \resizebox{1.\textwidth}{!}{
    \input{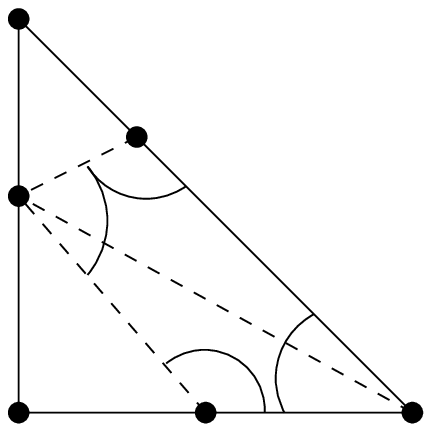}
    }
    \caption{Cut through lower right vertex and an edge}\label{cutrightvertex}
  \end{subfigure}%
    \caption{Angles to be estimated}\label{angles}
  \end{figure}

\subsection{Two edges are cut}
Extreme shapes of the inner triangles $T_0,\ldots, T_3$ can arise. Their definition can be seen in Figure~\ref{cutedge}. The discussion of how to choose
the free parameters is divided into two different parts. We start with
treating the case in which the interface
comes arbitrarily close to an edge so that an angle will approach the value $\pi$.

\subsubsection{The interface cuts arbitrarily close to an edge}
This situation is shown in Figure~\ref{twofacets}.
There are three situations in which this could happen, namely if
\begin{enumerate}
  \item $q \to 1$ and $s \to 1$, or if 
  \item $s \to 0$ and $r \to 1$, or if
  \item $q \to 0$ and $r \to 0$. 
\end{enumerate}
We use that for the angle $\vartheta$ between two vectors $a$ and $b$ it holds
\[
\cos(\vartheta) = \frac{(a,b)}{\norm{a}\norm{b}},
\]
where $(\cdot,\cdot)$ and $\norm{\cdot}$ are the Euclidean scalar product and norm, respectively. 
Thus, for the angles $\alpha, \beta$ and $\gamma$ in Figure~\ref{cutleftvertex} it holds
\begin{align*}
  \cos(\alpha) &= \frac{(1-r)(1-r-s)+r(r-q)}{\sqrt{(q-r)^2+(r-1)^2}\sqrt{(1-r-s)^2+r^2}}, \\
  \cos(\beta) &= \frac{s(1-r)+q(q-r)}{\sqrt{(q-r)^2+(r-1)^2}\sqrt{s^2+q^2}},\\
  \cos(\gamma) &= \frac{s(s-1+r)+rq}{\sqrt{(1-r-s)^2+r^2}\sqrt{s^2+q^2}}.
\end{align*}
If $q \to 1$ and $s \to 1$ it follows that $\cos(\alpha) \to \pi$. Therefore, for $q>\nicefrac12$
and $s>\nicefrac12$, we recommend one of the following choices.
\subsubsection{Choices for $r$}
\begin{equation}
  r \coloneqq 1-s, \qquad r \coloneqq q, \qquad r \coloneqq (1-s)(1-q).\label{chooser}
\end{equation}
Independent of the choice above, we can estimate such that it holds
\[
1\ge \cos(\alpha)\ge \frac{-1}{\sqrt2}, \qquad 
1\ge \cos(\beta)\ge \frac{1}{\sqrt5}, \qquad 
1\ge \cos(\gamma)\ge \frac{-1}{\sqrt2},
\]
and therefore for all angles in these cases it follows that $\alpha, \beta, \gamma \in (0^\circ, 135^\circ)$ for $q,s \in
(\nicefrac12, 1)$. The other two cases are treated similarly.
For $s < \nicefrac12$ and $r > \nicefrac12$ we suggest to set the values for $q$
as follows. 
\subsubsection{Choices for $q$}
\begin{equation}
  q \coloneqq s, \qquad q \coloneqq r, \qquad q \coloneqq (1-r)s,\label{chooseq}
\end{equation}
This means that independent of the choice above, we have that 
\[
1\ge \cos(\alpha)\ge \frac{-1}{\sqrt5}, \qquad 
1\ge \cos(\beta)\ge \frac{-1}{\sqrt2}, \qquad 
1\ge \cos(\gamma)\ge \frac{-1}{\sqrt{10}},
\]
and therefore for all angles in these cases it holds $\alpha, \beta, \gamma \in (0^\circ,
135^\circ)$ for $r \in (\nicefrac12, 1)$ and $s \in (0, \nicefrac12)$.
The last case in which the interface can come arbitrarily close to an edge is when $q\to0$ and
$r\to0$. In that case, for $q<\nicefrac12$ and $r<\nicefrac12$, we suggest one of the following
choices.
\subsubsection{Choices for $s$}
\begin{equation}
  s \coloneqq 1-r, \qquad s \coloneqq q, \qquad s \coloneqq qr,\label{chooses}
\end{equation}
Due to symmetry reasons, we find that independent of the choice above, it follows that
\[
1\ge \cos(\alpha)\ge \frac{-1}{\sqrt5}, \qquad 
1\ge \cos(\beta)\ge \frac{-1}{\sqrt{10}}, \qquad 
1\ge \cos(\gamma)\ge \frac{-1}{\sqrt{2}}.
\]
Thus, for all the angles in these cases it holds $\alpha, \beta, \gamma \in (0^\circ,
135^\circ)$ for $q$ and $r \in (0, \nicefrac12)$.

\subsubsection{The remaining cases in which two edges are cut}
In all the remaining cases in which two edges are cut, we choose the parameter which is not
determined by the cut of the interface as $\nicefrac12$. 
For these cases it follows that 
\[
1\ge \cos(\alpha)\ge \frac{-3}{\sqrt{10}}, \qquad 
1\ge \cos(\beta)\ge \frac{-2}{\sqrt{5}}, \qquad 
1\ge \cos(\gamma)\ge \frac{-2}{\sqrt{5}},
\]
and therefore it holds for the angles in all the cases
\[
\alpha, \beta, \gamma \in (0^\circ, 162^\circ), \quad \text{ for } q, r, s \in (0,1).
\]

\subsection{A vertex and an edge are cut}
In contrast to the case in which two edges are cut, only one value is determined in the case in
which a vertex and an edge are cut. In the
configuration shown in Figure~\ref{vertexfacet} we are free to choose the values for $s$ and
$q$. The value of $r$ is given by the cut of the interface.
Because of symmetry reasons, it suffices to consider the cases depicted in Figure~\ref{cutleftvertex}
and~\ref{cutrightvertex} to estimate the inner angles.
We propose to choose the parameters as follows:
\begin{equation}
  s =
\begin{cases}
  1-r & \text {if }r < \nicefrac12,\\
  q & \text{if } q < \nicefrac12,\\
  \nicefrac12 & \text{otherwise,}
\end{cases}\
  r =
\begin{cases}
  q & \text {if }q > \nicefrac12,\\
  1-s & \text {if }s > \nicefrac12,\\
  \nicefrac12 & \text{otherwise,}
\end{cases}\
  q =
\begin{cases}
  r & \text {if }r > \nicefrac12,\\
  s & \text {if }s < \nicefrac12,\\
  \nicefrac12 & \text{otherwise.}
\end{cases}\label{choicevertexfacet}
\end{equation}
For the inner angles in $T_0$ of the configuration in Figure~\ref{cutleftvertex} it holds
\begin{align*}
  \cos(\alpha_1) &= \frac{q-r}{\sqrt{(1-r)^2+(r-q)^2}},\\
  \cos(\beta_1) &= \frac{r}{\sqrt{(1-r)^2+r^2}},\\
  \cos(\gamma_1) &=
  \frac{(1-r)^2+r(r-q)}{\sqrt{(1-r)^2+r^2}\sqrt{(1-r)^2+(q-r)^2}},
\end{align*}
whereas for the inner angles in $T_1$, we find
\begin{align*}
  \cos(\alpha_2) &= \frac{1-r}{\sqrt{(1-r)^2+r^2}},\\
  \cos(\beta_2) &= \frac{s-1+r}{\sqrt{r^2+(s-1+r)^2}},\\
  \cos(\gamma_2) &=
  \frac{(1-r)(1-r-s)+r^2}{\sqrt{r^2+(1-r)^2}\sqrt{r^2+(1-r-s)^2}}.
\end{align*}
Therefore, we find that 
\begin{alignat*}{6}
1 &\ge \cos(\alpha_1)&&\ge \frac{-1}{\sqrt{2}} \quad &&\Rightarrow \alpha_1 \in (0^\circ, 135^\circ)
\quad &\text{ for } r,q,s \in (0,1),\\
1 &\ge \cos(\alpha_2)&&\ge 0 \quad &&\Rightarrow \alpha_2 \in (0^\circ, 90^\circ)
\quad &\text{ for } r,q,s \in (0,1),\\
1 &\ge \cos(\beta_1)&&\ge 0 \quad &&\Rightarrow \beta_1 \in (0^\circ, 90^\circ)
\quad &\text{ for } r,q,s \in (0,1),\\
1 &\ge \cos(\beta_2)&&\ge  \frac{-1}{\sqrt{2}} \quad &&\Rightarrow \beta_2 \in (0^\circ, 135^\circ)
\quad &\text{ for } r,q,s \in (0,1),\\
1 &\ge \cos(\gamma_1)&&\ge 0 \quad &&\Rightarrow \gamma_1 \in (0^\circ, 90^\circ)
\quad &\text{ for } r,q,s \in (0,1),\\
1 &\ge \cos(\gamma_2)&&\ge 0 \quad &&\Rightarrow \gamma_2 \in (0^\circ, 90^\circ)
\quad &\text{ for } r,q,s \in (0,1).
\end{alignat*}
If the cut goes through the lower right vertex and an edge as shown in Figure~\ref{cutrightvertex},
we arrive at
\begin{align*}
  \cos(\alpha_3) &= \frac{-s}{\sqrt{s^2+q^2}},&
  \cos(\alpha_4) &= \frac{1+q}{\sqrt{2}\sqrt{1+q^2}},\\
  \cos(\beta_3) &= \frac{1}{\sqrt{1+q^2}},&
  \cos(\beta_4) &=
  \frac{(1-r)+q(r-q)}{\sqrt{1+q^2}\sqrt{(1-r)^2+(r-q)^2}},\\
  \cos(\gamma_3) &=
  \frac{s+q^2}{\sqrt{1+q^2}\sqrt{s^2+q^2}},&
  \cos(\gamma_4) &= \frac{2r-1-q}{\sqrt{2}\sqrt{(1-r)^2+(r-q)^2}}
\end{align*}
for the remaining angles to be estimated. 
For the angles, we find that it holds
\begin{alignat*}{6}
1 &\ge \cos(\alpha_3)&&\ge \frac{-1}{\sqrt{2}} \quad &&\Rightarrow \alpha_1 \in (0^\circ, 135^\circ)
\quad &\text{ for } r,q,s \in (0,1),\\
1 &\ge \cos(\alpha_4)&&\ge \frac{1}{\sqrt{2}} \quad &&\Rightarrow \alpha_2 \in (0^\circ, 45^\circ)
\quad &\text{ for } r,q,s \in (0,1),\\
1 &\ge \cos(\beta_3)&&\ge \frac{1}{\sqrt{5}} \quad &&\Rightarrow \beta_3 \in (0^\circ, 64^\circ)
\quad &\text{ for } r,q,s \in (0,1),\\
1 &\ge \cos(\beta_4)&&\ge  0 \quad &&\Rightarrow \beta_4 \in (0^\circ, 90^\circ)
\quad &\text{ for } r,q,s \in (0,1),\\
1 &\ge \cos(\gamma_3)&&\ge \frac{1}{\sqrt{2}} \quad &&\Rightarrow \gamma_3 \in (0^\circ, 45^\circ)
\quad &\text{ for } r,q,s \in (0,1),\\
1 &\ge \cos(\gamma_4)&&\ge \frac{-3}{\sqrt{10}} \quad &&\Rightarrow \gamma_4 \in (0^\circ, 162^\circ)
\quad &\text{ for } r,q,s \in (0,1).
\end{alignat*}
For the cut through the upper vertex and the opposite edge, the estimates for the inner angles
follow due to symmetry reasons. 
The findings are collected in the following Lemma.
\begin{lemma}
  With the choice of parameters in~\eqref{chooser},~\eqref{chooseq},~\eqref{chooses},
  and~\eqref{choicevertexfacet}, all the interior angles in the triangles that can occur through a
  cut of an interface are bounded by $162^\circ$ independent of $r,q,s \in (0,1)$.
\end{lemma}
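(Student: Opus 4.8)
The plan is to turn the claimed angular bound into a bound on cosines and then verify the latter configuration by configuration. Because the cosine is strictly decreasing on $(0,\pi)$ and $\arccos(-3/\sqrt{10}) \approx 161.6^\circ < 162^\circ$, it suffices to prove that every interior angle $\vartheta$ of every subtriangle $T_0,\dots,T_3$ satisfies $\cos(\vartheta) \ge -3/\sqrt{10}$, which is the weakest (most negative) of all the lower bounds that arise. In every configuration the relevant cosines are obtained from the coordinates of the six nodes through $\cos(\vartheta) = (a,b)/(\norm{a}\,\norm{b})$, with $a,b$ the two edge vectors meeting at the vertex in question; these are precisely the rational functions of $s,q,r$ displayed above.

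I would then organize the argument by geometry. By the cut restriction imposed in Section~\ref{fem}, the interface meets a patch either through two edges (Figure~\ref{cutedge}) or through one vertex and the opposite edge (Figure~\ref{cutvertexedge}); the symmetry of the reference triangle reduces the vertex--edge situation to the two representatives in Figures~\ref{cutleftvertex} and~\ref{cutrightvertex}. When two edges are cut, the interface fixes two parameters and leaves one free, so I would split according to which dangerous limit is approached---$q,s\to1$, or $s\to0$ with $r\to1$, or $q,r\to0$, each of which drives one angle to $\pi$---and substitute the matching choice from \eqref{chooser}, \eqref{chooseq}, or \eqref{chooses}, using the value $\nicefrac12$ on the complementary parameter range. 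When a vertex and an edge are cut, only one parameter is fixed, and I would substitute \eqref{choicevertexfacet}.

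The heart of the proof is the resulting minimization. After each substitution a cosine becomes a rational function of the one remaining free parameter on a subinterval of $(0,1)$ (or of two parameters on a box), and the task is to show its infimum is at least $-3/\sqrt{10}$. I expect this to be the main obstacle: each of the three alternative prescriptions for the free parameter must be checked separately to justify the claim that the bound holds \emph{independent of the choice}, and these rational functions are not monotone in general, so one must either locate critical points by differentiation or bound numerator against denominator directly. The uniformity up to the degenerate limits is exactly what the parameter choices were designed to guarantee, so the extrema over the closed parameter ranges are attained with the cosine still above $-3/\sqrt{10}$.

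Collecting the per-case estimates yields the single lower bound $\cos(\vartheta) \ge -3/\sqrt{10}$ for every interior angle, whence $\vartheta \le \arccos(-3/\sqrt{10}) < 162^\circ$ uniformly in $r,q,s\in(0,1)$. This is the assertion of the lemma with $\gamma_* = 162^\circ$.
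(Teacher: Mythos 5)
Your proposal takes essentially the same route as the paper: the paper likewise reduces the claim to lower bounds on $\cos(\vartheta)=(a,b)/(\norm{a}\norm{b})$, splits into the two cut configurations (two edges versus vertex--edge, with symmetry reducing the latter to Figures~\ref{cutleftvertex} and~\ref{cutrightvertex}), treats the three degenerate limits by substituting the choices~\eqref{chooser},~\eqref{chooseq},~\eqref{chooses} with $\nicefrac12$ in the remaining cases and~\eqref{choicevertexfacet} for the vertex cuts, and collects the per-case cosine bounds, the weakest being $-3/\sqrt{10}$, i.e.\ $\arccos(-3/\sqrt{10})<162^\circ$. You correctly flag the per-case minimization of the rational functions as the substantive work; the paper handles this exactly where you predict, by stating the explicit cosine formulas and the resulting bounds case by case rather than by any structurally different argument.
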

With this result we are in the position to analyze the a priori error of this locally adapted finite
element patch method.
\begin{remark}
Due to the adjustments of the parameters which are not determined by the location of the interface,
we have to ensure continuity across edges. That means that the parameter for
the neighboring element across an edge has to be set to the same value.
\end{remark}
\begin{remark}
  Note that the analysis of the angles also includes the case that two of the parameters $r,q$ and
  $s$ are equal $\nicefrac12$ and the remaining parameter takes any value between $0$ and $1$. This
  means that if a parameter is set in order to ensure continuity, we do not have to adjust another
  parameter in those cells. In particular, we only adjust those cells which are direct neighbors of
  a cut cell. 
\end{remark}
The case that two neighboring cells are cut by the interface can be circumvented by refinement of
the mesh.

\section{A priori error analysis}\label{error}
With the maximum angle condition satisfied, we can define a robust Lagrangian interpolation operator
$L_h \colon H^2(T)\cap C(\bar{T})\to V_h$ such that 
\[
\norm{\nabla^k(v-L_hv)}_T \le ch_{T,\max}^{2-k}\norm{\nabla^2v}_T.
\]
holds
with $c$ a positive constant,
see~\cite{Apel1999}. In order to derive a priori error estimates we have to take into
account that the partitioning of the mesh into submeshes $\mathcal{T}_h = \mathcal{T}_{1,h} \cup
\mathcal{T}_{2,h}$ connected to $\kappa_1$ and $\kappa_2$ does not coincide with the partitioning of the domain $\Omega = \Omega_1 \cup
\Gamma \cup \Omega_2$ which means that $\mathcal{T}_{i,h}$ not necessarily covers $\Omega_i$, see
Figure~\ref{intersection}.
This would only be possible if the interface $\Gamma$ is a polygon.
\begin{figure}[h]
   \centering
   \resizebox{.28\textwidth}{!}{\input{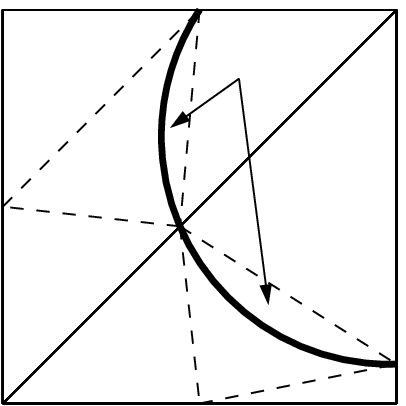}}
     \caption{Region between $\Omega_i$ and $\mathcal{T}_{i,h}$}\label{intersection}
\end{figure}
Later, we will use the following auxiliary result which is an estimate for functions in the region between
$\Omega_i$ and $\mathcal{T}_{i,h}$.
\begin{lemma}\label{region}
  Let $v\in H^1(\Omega)$. For the convex region between $\Omega_i$ and $\mathcal{T}_{i,h}$ it holds
  \[
  \norm{v}_{\Omega_i\setminus\mathcal{T}_{i,h}} \le ch\norm{v}_{H^1(\Omega)}, \quad i = 1,2.
  \]
  For $v_h\in V_h$ it holds
  \[
  \norm{v_h}_{H^s(\Omega_i\setminus\mathcal{T}_{i,h})} \le
  ch^{\nicefrac12}\norm{v_h}_{H^s(\Omega)}, \quad i = 1,2, \ s = 0,1.
  \]
\end{lemma}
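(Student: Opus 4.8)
The plan is to use that the region $R_i \coloneqq \Omega_i\setminus\mathcal{T}_{i,h}$ consists of thin slivers, one inside each cut patch, each bounded by the smooth interface $\Gamma$ on one side and by its piecewise linear reconstruction $\Gamma_h$ on the other. The preparatory geometric step is to bound the width of these slivers. Since $\Gamma_h$ agrees with $\Gamma$ at the two cut points on $\partial T$, which lie a distance $O(h)$ apart, and $\Gamma$ is $C^2$, the error of linear interpolation of $\Gamma$ along such a chord is $O(h^2)$; hence the normal width of each sliver is bounded by $\delta \le c h^2$. Because $\Gamma$ has length $O(1)$ this gives $\abs{R_i} = O(h^2)$, and within a single cut patch $T$ one has the area ratio $\abs{R_i\cap T}\le c h\,\abs{T}$. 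I would record these two facts first.

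For the first estimate I would run a tubular-neighborhood trace argument. For $h$ small, $\delta$ is far below the radius of curvature of $\Gamma$, so $R_i$ lies in the strip where normal coordinates $(s,t)$ about $\Gamma$ are valid, with $s$ the arclength and $0\le t\le\psi(s)\le\delta$ the normal distance (this is where the regularity/convexity of the sliver is used). Writing $v(s,t)=v(s,0)+\int_0^t\partial_t v\,d\tau$, squaring, applying Cauchy--Schwarz and integrating first in $t$ and then in $s$ gives
\[
\norm{v}_{R_i}^2 \le c\delta\,\norm{v}_{L^2(\Gamma)}^2 + c\delta^2\,\norm{\nabla v}_{\Omega}^2 .
\]
The interior trace inequality $\norm{v}_{L^2(\Gamma)}\le c\norm{v}_{H^1(\Omega)}$ together with $\delta\le ch^2$ then yields $\norm{v}_{R_i}\le ch\,\norm{v}_{H^1(\Omega)}$, which is the claimed bound.

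For the discrete estimate the same argument is not available: $v_h\in V_h$ has only a piecewise constant gradient, and the right-hand side is $\norm{v_h}_{H^s(\Omega)}$, not the full $H^1$ norm. Instead I would argue patchwise by scaling. For $s=0$, the inverse estimate $\norm{v_h}_{L^\infty(T)}^2\le ch^{-2}\norm{v_h}_T^2$ combined with the area ratio gives
\[
\norm{v_h}_{R_i\cap T}^2 \le \norm{v_h}_{L^\infty(T)}^2\,\abs{R_i\cap T} \le ch^{-2}\norm{v_h}_T^2\cdot ch\,\abs{T}\le ch\,\norm{v_h}_T^2 ,
\]
and summation over the cut patches yields $\norm{v_h}_{R_i}\le ch^{\nicefrac12}\norm{v_h}_{\Omega}$. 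For $s=1$ the gradient is constant on each subtriangle $T'$; denoting its value by $g$ we get $\norm{\nabla v_h}_{R_i\cap T'}^2=\abs{g}^2\,\abs{R_i\cap T'}\le ch\,\abs{g}^2\,\abs{T'}=ch\,\norm{\nabla v_h}_{T'}^2$ without any inverse estimate. Summing and combining with the $s=0$ bound then gives $\norm{v_h}_{H^1(R_i)}\le ch^{\nicefrac12}\norm{v_h}_{H^1(\Omega)}$.

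The main obstacle I expect is the geometric preparation rather than the functional-analytic steps: one must rigorously justify the $O(h^2)$ width uniformly over all cut patches (using the smoothness of $\Gamma$ and the interpolation property of $\Gamma_h$) and verify that the slivers lie inside the tube where normal coordinates are well defined. The second subtlety is that, for the discrete $s=1$ bound, the continuous trace argument cannot be reused and one must instead exploit the piecewise polynomial structure, which is exactly what forces the weaker exponent $h^{\nicefrac12}$ in place of $h$.
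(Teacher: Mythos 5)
Your continuous estimate retraces, in self-contained form, exactly what the paper assembles from the literature: the normal-coordinate trace argument in the $O(h^2)$-wide tube is precisely the inequality $\norm{v}_{\Omega_i\setminus\mathcal{T}_{i,h}}^2 \le c\bigl(h^2\norm{v}_{\Gamma}^2 + h^4\norm{\nabla v}_{\Omega}^2\bigr)$ that the paper cites from Bramble and King, after which both you and the paper conclude with the global trace inequality $\norm{v}_\Gamma \le c\norm{v}_{H^1(\Omega)}$. For the discrete estimate your route genuinely differs. The paper reuses the same tube estimate with $\Gamma$ replaced by $\Gamma_h$ and pays the loss through the discrete trace inequality $\norm{v_h}_{\Gamma_h}\le ch^{-\nicefrac12}\norm{v_h}_\Omega$, obtained from elementwise inverse trace estimates; so your remark that the trace argument is ``not available'' discretely is not quite right --- it is available, at the price of $h^{-\nicefrac12}$ on the discrete interface, and that is exactly where the paper's weaker exponent comes from. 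You instead extract $h^{\nicefrac12}$ from the area ratio $\abs{R_i\cap T}\le ch\abs{T}$, via an $L^\infty$ inverse estimate for $s=0$ and the piecewise-constant gradient for $s=1$. This is more elementary (no tube estimate and no trace inequality on the discrete side) and isolates transparently why only $h^{\nicefrac12}$ survives.

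The step you should not take for granted is the uniformity of your patchwise inequalities in the cut parameters; this is where your argument has a gap, though the paper's terse proof shares it. The inverse estimate $\norm{v_h}_{L^\infty(T)}\le ch^{-1}\norm{v_h}_{L^2(T)}$ is affine-invariant for a single linear polynomial, but $v_h$ is only \emph{piecewise} linear on the macro patch, and the subtriangles may degenerate: in a two-edge cut the parameters $q,r$ are fixed by the interface, not free, so a cut passing at distance $o(h)$ from a vertex forces a subtriangle of area $\ll h^2$, and the hat function of a node supported there has $L^\infty$ norm one but patchwise $L^2$ norm far below $h$, so your constant blows up. Similarly, for $s=1$ the correct per-subtriangle ratio $\abs{R_i\cap T'}/\abs{T'}$ is $O(h)$ only if the subtriangle carrying the sliver has height $\gtrsim h$ over $\Gamma_h$; a sliver-shaped subtriangle hugging the chord can be filled by $R_i$ to an $O(1)$ fraction, and then neither your scaling argument nor, for that matter, the paper's inverse trace inequality (cited from Ciarlet, which presupposes shape regularity the cut subtriangles need not have) is uniform. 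So your proof is complete under an implicit ``no small cut'' non-degeneracy assumption on the subtriangles adjacent to $\Gamma_h$; to make it watertight one must exclude near-vertex cuts (e.g.\ by the convention that cuts through vertices do not count as cuts, together with mesh refinement, as in Section 2) or treat them separately, and the same caveat applies to the paper's own argument.
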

\begin{proof}
  For a cell $T \in \mathcal{T}_h$ and $v_h$ a linear polynomial on $T$, the inverse
  inequality 
  \[
  \norm{v_h}_{\partial T} \le c h^{-\nicefrac12}\norm{v_h}_T
  \]
  holds, see~\cite{Ciarlet1978}. Therefore, we can estimate the norm over the discrete interface
  $\Gamma_h$ as
  \[
  \norm{v_h}_{\Gamma_h} \le c h^{-\nicefrac12} \norm{v_h}_{\Omega}.
  \]
  For the convex region between $\Omega_i$ and $\mathcal{T}_{i,h}$, the relations 
  \begin{align*}
  \norm{v}_{\Omega_i\setminus\mathcal{T}_{i,h}}^2 &\le c(h^2\norm{v}_{\Gamma}^2 +
  h^4\norm{\nabla v}_{\Omega}^2),\\
  \norm{v_h}_{\Omega_i\setminus\mathcal{T}_{i,h}}^2 &\le c(h^2\norm{v}_{\Gamma_h}^2 +
  h^4\norm{\nabla v_h}_{\Omega}^2)
\end{align*}
  are proven in~\cite{Bramble1994}. With the inverse inequality and the discrete trace inequality,
  the estimate follows for $v_h \in V_h$. The global trace inequality for $v \in H^1(\Omega)$
  \[
  \norm{v}_\Gamma \le c \norm{v}_{H^1(\Omega)} 
  \]
  yields the remaining inequality to be proven.
\end{proof}

We cannot directly apply the interpolation estimate~\eqref{interpolation} to all cells in the triangulation $\mathcal{T}_h$
as the solution $u$ is not smooth enough on cells which are cut by the interface, i.e.~for cells
$T\in\mathcal{T}_h$ with $T \cap \Gamma \neq\emptyset$.
Therefore, we define the set of
all elements $T \in \mathcal{T}_{h}$ which are cut by the interface
\begin{align}
S_h &= \{T\in \mathcal{T}_{h} \ \vert \ T\cap\Gamma \neq\emptyset\}.
\end{align}
An interpolation estimate on these cells of the triangulation is proven first.
\begin{lemma}\label{lemmainterpolation}
  For the cells $T\in S_h$, the interpolation estimate
  \begin{equation}
    \norm{\nabla(L_hu -u)}_{S_h} \le ch\norm{u}_{H^2(\Omega_1\cup\Omega_2)}
  \end{equation}
  holds with a positive constant $c$.
\end{lemma}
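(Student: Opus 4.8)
The plan is to circumvent the fact that $u\notin H^2(T)$ on a cut cell $T\in S_h$ — so that the robust interpolation estimate \eqref{interpolation} cannot be applied directly — by replacing $u$ locally by smooth one-sided extensions. For $i=1,2$ let $\tilde u_i\in H^2(\Omega)$ be a Stein extension of $u|_{\Omega_i}$, so that $\tilde u_i=u$ on $\Omega_i$, $\tilde u_1=\tilde u_2=u$ on $\Gamma$ (both traces equal the common interface value since $[u]=0$), and $\norm{\tilde u_i}_{H^2(\Omega)}\le c\norm{u}_{H^2(\Omega_i)}$; such extensions exist because the subdomains have sufficiently regular boundaries. As $\tilde u_i\in H^2(\Omega)\hookrightarrow C(\bar\Omega)$ in two dimensions, the nodal interpolant $L_h\tilde u_i$ is well defined.

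The decisive observation is that the adaptation places the nodes lying on the discrete interface $\Gamma_h$ exactly at the crossing points $x_1^T,x_2^T\in\Gamma$. Hence each subtriangle $T_j\subset T$ lies on one definite side of $\Gamma_h$, say the $\Omega_1$ side (so $T_j\subset\mathcal{T}_{1,h}$), and all its vertices are either interior nodes of $\bar\Omega_1$ or interface nodes on $\Gamma$; at every one of them $u=\tilde u_1$. Since $L_hu|_{T_j}$ and $L_h\tilde u_1|_{T_j}$ are the linear functions fixed by these nodal values, they coincide, so $L_hu=L_h\tilde u_1$ on $T_j$. I would then split, using $u=\tilde u_1$ on $\Omega_1$ and $u=\tilde u_2$ on $\Omega_2$,
\[
\norm{\nabla(L_hu-u)}_{T_j}^2=\norm{\nabla(L_h\tilde u_1-\tilde u_1)}_{T_j\cap\Omega_1}^2+\norm{\nabla(L_h\tilde u_1-\tilde u_2)}_{T_j\cap\Omega_2}^2.
\]

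Summing the first contribution over all subtriangles (hence over all cut cells, which are disjoint) and invoking \eqref{interpolation} cellwise — legitimate because $\tilde u_1\in H^2$ and the maximum angle condition of Section~\ref{maxangle} holds on every subtriangle — gives a bound $ch^2\norm{\tilde u_1}_{H^2(\Omega)}^2\le ch^2\norm{u}_{H^2(\Omega_1)}^2$. For the second contribution I would note that the union of the sets $T_j\cap\Omega_2$ over the $\Omega_1$-side subtriangles is contained in the misclassified strip $\Omega_2\setminus\mathcal{T}_{2,h}$ of Figure~\ref{intersection}. Writing $L_h\tilde u_1-\tilde u_2=(L_h\tilde u_1-\tilde u_1)+(\tilde u_1-\tilde u_2)$, the first term is again controlled by \eqref{interpolation}, while for $w\coloneqq\tilde u_1-\tilde u_2\in H^2(\Omega)$ the regional inequality proven within Lemma~\ref{region}, applied componentwise to $\nabla w\in H^1(\Omega)$, yields
\[
\norm{\nabla w}_{\Omega_2\setminus\mathcal{T}_{2,h}}^2\le c\big(h^2\norm{\nabla w}_{\Gamma}^2+h^4\norm{\nabla^2 w}_{\Omega}^2\big)\le ch^2\norm{w}_{H^2(\Omega)}^2,
\]
where the trace inequality $\norm{\nabla w}_\Gamma\le c\norm{w}_{H^2(\Omega)}$ was used. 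Exchanging the roles of the two subdomains treats the $\Omega_2$-side subtriangles identically.

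Collecting the bounds gives $\norm{\nabla(L_hu-u)}_{S_h}^2\le ch^2\norm{u}_{H^2(\Omega_1\cup\Omega_2)}^2$, which is the claim after taking square roots. I expect the main obstacle, and the real content of the argument, to be the passage from $u$ to its one-sided extensions: it rests entirely on the nodes of $\Gamma_h$ sitting exactly on $\Gamma$, which is precisely what makes the nodal interpolants of $u$ and of the one-sided extension agree subtriangle by subtriangle; without this the identity $L_hu=L_h\tilde u_1$ fails. The only genuinely quantitative step that remains is the strip estimate, and here Lemma~\ref{region} (with the $O(h^2)$ strip width entering through the factors $h^2$ and $h^4$) is tailor-made, so no further delicate computation is anticipated.
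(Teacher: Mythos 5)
Your proof is correct, and its skeleton is the same as the paper's: one-sided $H^2(\Omega)$ extensions $\tilde{u}_i$ satisfying~\eqref{extension}, the observation that on each subtriangle of a cut patch the piecewise linear nodal interpolants of $u$ and of the relevant extension coincide (because the adapted vertices sit exactly on $\Gamma$ and the remaining vertices lie in $\bar{\Omega}_i$), and the robust estimate~\eqref{interpolation}, applicable under the maximum angle condition of Section~\ref{maxangle}. The genuine difference is your explicit treatment of the sliver between $\Gamma$ and $\Gamma_h$. The paper disposes of the corresponding term by asserting $\norm{\nabla(u-\tilde{u})}_{S_h}\le\norm{\tilde{u}_i-u}_{H^2(\Omega)}=0$, which is only literally true on $S_h\cap\Omega_i$: on the mismatch region $\mathcal{T}_{i,h}\setminus\Omega_i=\Omega_{3-i}\setminus\mathcal{T}_{3-i,h}$ one has $u=\tilde{u}_{3-i}\neq\tilde{u}_i$, and since $[\kappa\partial_n u]=0$ with $\kappa_1\neq\kappa_2$ forces $[\partial_n u]\neq 0$, the difference $\nabla(\tilde{u}_1-\tilde{u}_2)$ genuinely does not vanish there. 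Your strip estimate --- the Bramble-type inequality quoted in the proof of Lemma~\ref{region}, applied componentwise to $\nabla w$ with $w=\tilde{u}_1-\tilde{u}_2\in H^2(\Omega)$, combined with the trace inequality $\norm{\nabla w}_\Gamma\le c\norm{w}_{H^2(\Omega)}$ --- supplies exactly the missing $O(h)$ bound, consistent with the $O(h^2)$ width of the strip and with the claimed rate. So your argument is not merely a restatement: it closes a step that the paper's proof glosses over, at the cost of some extra (correct) bookkeeping in the split of $\norm{\nabla(L_hu-u)}_{T_j}^2$ into the contributions from $T_j\cap\Omega_1$ and $T_j\cap\Omega_2$; the only cosmetic blemish is that the interpolation term $\norm{\nabla(L_h\tilde{u}_1-\tilde{u}_1)}$ is counted twice in your decomposition, which is harmless for the final constant.
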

\begin{proof}
   We divide the estimation of the interpolation
   error into the cells which are effected by the interface and those which are not effected. On
   those cells not cut by the interface, we use the standard interpolation estimate and extend the
   domain to the complete domain again:
   \begin{align}
   \norm{\nabla (u -L_h u)}_{\Omega}^2 &= \norm{\nabla (u -L_h u)}_{\Omega\setminus S_h}^2 +
   \norm{\nabla (u -L_h u)}_{S_h}^2\nonumber\\
&\le ch^2 \norm{\nabla^2 u}_{\Omega_1 \cup \Omega_2}^2 +
\norm{\nabla (u -L_h u)}_{S_h}^2.
 \end{align}
 For the last term we introduce a continuous extension which allows us to use the interpolation
 estimate. Let $\tilde{u}_i \in H^2(\Omega)$ be a continuous extension of $u\in H^2(\Omega_i)$ to
 the complete domain $\Omega$. For this extension it holds
 \begin{equation}
   \norm{\tilde{u}_i-u}_{H^2(\Omega_i)} = 0, \quad \norm{\tilde{u}_i}_{H^2(\Omega)}\le
   c\norm{u}_{H^2(\Omega_i)}, \quad i = 1,2, 
   \label{extension}
 \end{equation}
 if the interface $\Gamma$ is smooth enough, see~\cite{Wloka1987}.
 For the remaining term we add and subtract the continuous extension $\tilde{u}$ and derive
 \begin{align}
 \norm{\nabla(u-L_hu)}_{S_h} &\le \norm{\nabla(u-\tilde{u})}_{S_h} 
 + \norm{\nabla(\tilde{u}-L_hu)}_{S_h}, \nonumber\\ 
 &= \norm{\nabla(u-\tilde{u})}_{S_h} 
 + \norm{\nabla(\tilde{u}-L_h\tilde{u})}_{S_h}\label{gradextension}
 \end{align}
 as for the nodal interpolant on $S_h$ it holds $L_h u = L_h\tilde{u}$. The continuous extension
 $\tilde{u}$ has enough regularity to apply~\eqref{interpolation} which gives us
 \begin{equation}
 \norm{\nabla(\tilde{u}-L_h\tilde{u})}_{S_h} \le ch\norm{\nabla^2\tilde{u}}_{S_h} \le ch
 \norm{\nabla^2\tilde{u}_i}_{\Omega} \le ch \norm{u}_{H^2(\Omega_1\cup\Omega_2)}.\label{Si}
 \end{equation}
 Here, we enlarged the domain from $S_i$ to $\Omega$ and used the continuity of the
 extension~\eqref{extension}.
 For the first term in~\eqref{gradextension}, we have
 \[
 \norm{\nabla (u -\tilde{u})}_{S_h} \le \norm{\tilde{u}_i-u}_{H^2(\Omega)} = 0,
 \]
 which completes the proof.
\end{proof}
\begin{theorem}\label{thmestimate}
  Let $\Omega \in \mathds{R}^2$ be a domain with convex polygonal boundary. We assume that the
  interface $\Gamma$ admits a $C^2$-parameterization and that it splits the domain into
  $\Omega = \Omega_1\cup\Gamma\cup\Omega_2$ such that the solution $u\in H^1_0(\Omega)$
  satisfies a stability estimate
  \[
  u \in H^1_0(\Omega) \cap H^2(\Omega_1\cup\Omega_2), \quad \norm{u}_{H^2(\Omega_1\cup\Omega_2)} \le
  c_s\norm{f}.
  \]
  Then the estimate for the adapted finite element solution $u_h \in V_h$ 
  \[
  \norm{\nabla(u-u_h)}_{\Omega} \le Ch\norm{f}, \quad \norm{u-u_h}_{\Omega} \le Ch^2\norm{f}
  \]
  holds.
\end{theorem}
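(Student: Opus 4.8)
The plan is to prove the two bounds separately: the energy (gradient) estimate by coercivity of $a_h$ together with the interpolation estimate of Lemma~\ref{lemmainterpolation}, and the $L^2$ estimate by an Aubin--Nitsche duality argument. The recurring difficulty, already flagged after~\eqref{discproblem}, is that $a_h$ integrates over the discrete subdomains $\mathcal T_{i,h}$ instead of the exact $\Omega_i$, so $\kappa$ is evaluated incorrectly in the thin layer between $\Gamma$ and $\Gamma_h$. Galerkin orthogonality therefore fails, and in both parts a consistency (variational crime) term supported only on this layer must be absorbed using the layer estimates of Lemma~\ref{region}.

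I would first record that $a_h$ is coercive on $V_h\subset H^1_0(\Omega)$ with constant $\min_i\kappa_i$ (via Poincar\'e) and bounded with constant $\max_i\kappa_i$, so that $u_h$ exists and is unique. Setting $e_h=u_h-L_hu\in V_h$ and inserting the discrete equation~\eqref{discproblem} and the continuous equation~\eqref{contproblem} tested with $e_h\in H^1_0(\Omega)$, I obtain
\[
a_h(e_h,e_h)=\bigl[a(u,e_h)-a_h(u,e_h)\bigr]+a_h(u-L_hu,e_h).
\]
The second term is at most $c\,\norm{\nabla(u-L_hu)}_\Omega\norm{\nabla e_h}_\Omega\le ch\norm{u}_{H^2(\Omega_1\cup\Omega_2)}\norm{\nabla e_h}_\Omega$ by Lemma~\ref{lemmainterpolation}. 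The first term is an integral over the layer only; bounding $\nabla u$ there by the first (smooth) estimate of Lemma~\ref{region} applied to the continuous extension, and $\nabla e_h$ by the second (discrete, $s=1$) estimate, produces a factor $h\cdot h^{\nicefrac12}$, hence $ch^{\nicefrac32}\norm{u}_{H^2(\Omega_1\cup\Omega_2)}\norm{\nabla e_h}_\Omega$. Dividing by $\norm{\nabla e_h}_\Omega$ and combining with the interpolation error through the triangle inequality yields $\norm{\nabla(u-u_h)}_\Omega\le ch\norm{u}_{H^2(\Omega_1\cup\Omega_2)}\le Ch\norm{f}$ by the stability assumption.

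For the $L^2$ bound I would introduce the dual solution $z\in H^1_0(\Omega)\cap H^2(\Omega_1\cup\Omega_2)$ of the self-adjoint problem $a(\varphi,z)=(\varphi,u-u_h)$ for all $\varphi\in H^1_0(\Omega)$, which satisfies $\norm{z}_{H^2(\Omega_1\cup\Omega_2)}\le c_s\norm{u-u_h}$ by the assumed regularity. With $e=u-u_h$, testing the dual problem with $\varphi=e$ and using~\eqref{contproblem} and~\eqref{discproblem} with test function $L_hz\in V_h$ gives
\[
\norm{e}^2=a(u-u_h,z-L_hz)+\bigl[a_h(u_h,L_hz)-a(u_h,L_hz)\bigr].
\]
The first term is bounded by $\norm{\nabla e}_\Omega\norm{\nabla(z-L_hz)}_\Omega\le ch\norm{f}\cdot ch\norm{e}$, hence $O(h^2)$.

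The main obstacle is the remaining consistency term, again a layer integral of $\nabla u_h\cdot\nabla L_hz$: estimating both discrete factors directly by the $h^{\nicefrac12}$ bound of Lemma~\ref{region} yields only $O(h)$, which is too weak. I would instead split $\nabla u_h=\nabla u+\nabla(u_h-u)$ and $\nabla L_hz=\nabla z+\nabla(z-L_hz)$ inside the layer and expand into four products. The product of the two smooth parts is $O(h^2)$ by applying the first estimate of Lemma~\ref{region} to each factor; every product containing $\nabla(u_h-u)$ or $\nabla(z-L_hz)$ is made $O(h^2)$ by bounding that factor's layer norm by its full norm over $\Omega$ (using the energy estimate $\norm{\nabla(u-u_h)}_\Omega\le ch\norm{f}$ or the interpolation bound $\norm{\nabla(z-L_hz)}_\Omega\le ch\norm{z}_{H^2(\Omega_1\cup\Omega_2)}$) while the complementary smooth factor contributes an $O(h)$ layer norm through Lemma~\ref{region}. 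Summing the four $O(h^2)$ products, dividing by $\norm{e}$ and using $\norm{z}_{H^2(\Omega_1\cup\Omega_2)}\le c_s\norm{e}$ then gives $\norm{u-u_h}_\Omega\le Ch^2\norm{f}$.
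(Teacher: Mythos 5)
Your proof is correct, and its overall architecture matches the paper's: a consistency (perturbed Galerkin) term localized to the layer between $\Gamma$ and $\Gamma_h$, Lemma~\ref{region} to control layer norms, and an Aubin--Nitsche duality argument whose critical layer term must be split into smooth and correction parts to reach $O(h^2)$. The genuine difference is in the energy estimate. The paper works with the full error $e_h = u-u_h$ and derives the perturbed orthogonality $(\kappa\nabla e_h,\nabla\varphi_h)_\Omega = \sum_i(\delta\kappa_i\nabla u_h,\nabla\varphi_h)_{\Omega_i\setminus\mathcal{T}_{i,h}}$, so its consistency term carries the \emph{discrete} gradient $\nabla u_h$ on the layer; both layer factors are then bounded by the discrete $h^{\nicefrac12}$ estimate of Lemma~\ref{region}, giving only an $O(h)$ consistency contribution, and Young's inequality plus adding and subtracting $u$ are needed to absorb the resulting terms. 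You instead run a first-Strang-lemma argument on the discrete error $u_h-L_hu$, anchoring the consistency term on the \emph{continuous} solution via $a(u,\cdot)-a_h(u,\cdot)$, so that the smooth layer estimate applies (through the extension $\tilde{u}_i$, which you correctly invoke --- $\nabla u$ alone is not in $H^1(\Omega)$) and yields $O(h^{\nicefrac32})$ directly; this avoids Young's inequality and any bound on $\norm{\nabla u_h}$ in the first part, at the modest cost of stating coercivity of $a_h$ explicitly. Your duality step is essentially the paper's: the four-product expansion of $\nabla u_h\cdot\nabla L_hz$ on the layer is exactly the paper's triangle-inequality splittings $\nabla u_h = \nabla u + \nabla(u-L_hu)+\nabla(L_hu-u_h)$ and $\nabla L_hz = \nabla z + \nabla(z-L_hz)$, each factor being $O(h)$ on the layer, and your diagnosis that the naive double $h^{\nicefrac12}$ bound only gives $O(h)$ is precisely why the paper performs this splitting. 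One small citation slip: the global bound $\norm{\nabla(u-L_hu)}_\Omega\le ch\norm{u}_{H^2(\Omega_1\cup\Omega_2)}$ is not literally Lemma~\ref{lemmainterpolation}, which is stated on $S_h$ only, but its combination with the standard interpolation estimate on uncut cells, i.e.\ estimate~\eqref{Lh} in the paper; the same remark applies to your interpolation bound for the dual solution $z$.
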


\begin{proof}
  \begin{enumerate}
    \item We prove the first inequality $\norm{\nabla (u-u_h)}\le Ch\norm{f}$:\\
  For the error $e_h = u-u_h$ and for all $\varphi_h \in V_h$ it holds 
  \begin{align*}
    (\kappa\nabla e_h, \nabla\varphi_h)_\Omega
    &= \sum_{i=1}^2 (\kappa_i\nabla e_h,
    \nabla\varphi_h)_{\Omega_i} \\
    &= \sum_{i=1}^2 \{(\kappa_i\nabla u,
    \nabla\varphi_h)_{\Omega_i} - (\kappa_i\nabla u_h,
    \nabla\varphi_h)_{\Omega_i}\},
  \end{align*}
  Using $\Omega_i = (\mathcal{T}_{i,h} \setminus (\mathcal{T}_{i,h} \setminus \Omega_i)) \cup (\Omega_i
  \setminus \mathcal{T}_{i,h})$ and the relations
  \[
  \Omega_1\setminus\mathcal{T}_{1,h} = \mathcal{T}_{2,h}\setminus\Omega_2, \qquad
  \Omega_2\setminus\mathcal{T}_{2,h} = \mathcal{T}_{1,h}\setminus\Omega_1,
  \]
  results in
  \[
   \sum_{i=1}^2 (\kappa_i\nabla u_h, \nabla\varphi_h)_{\Omega_i}
   = \sum_{i=1}^2 (\kappa_i\nabla u_h, \nabla\varphi_h)_{\mathcal{T}_{i,h}}
   + \sum_{i=1}^2 (\delta\kappa_i\nabla u_h,
   \nabla\varphi_h)_{\Omega_i\setminus\mathcal{T}_{i,h}},
  \]
   with 
   \[
   \delta\kappa_i = \begin{cases} \kappa_1 -\kappa_2, & i = 1, \\
   \kappa_2 - \kappa_1, & i = 2. \end{cases}
   \]
   Taking~\eqref{contproblem} and~\eqref{discproblem} into account, it follows that
   \[
   \sum_{i=1}^2 (\kappa_i\nabla u, \nabla\varphi_h)_{\Omega_i} = \sum_{i=1}^2 (\kappa_i\nabla u_h,
   \nabla\varphi_h)_{\mathcal{T}_{i,h}}
   \]
   and thus,
   a perturbed Galerkin orthogonality
   \begin{equation}
   (\kappa\nabla e_h, \nabla\varphi_h)_\Omega = \sum_{i=1}^2 (\delta\kappa_i\nabla u_h,
   \nabla\varphi_h)_{\Omega_i\setminus\mathcal{T}_{i,h}},\label{Galerkin}
 \end{equation}
   holds.
   Estimating
   \begin{align*}
     \norm{\nabla e_h}^2\le 
     (\kappa \nabla
     e_h, \nabla e_h)
     &= \big(\kappa \nabla e_h, \nabla (u-\varphi_h)\big) + \big(\kappa \nabla e_h, \nabla
     (\varphi_h - u_h)\big),
   \end{align*}
 and picking the Lagrangian interpolant $\varphi_h = L_h u \in V_h$ yields
   \begin{align*}
\norm{\nabla e_h}^2 
     \le c&\norm{\nabla e_h}\norm{\nabla(u-L_h u)} \\&+ \sum_{i=1}^2 \norm{\delta\kappa_i\nabla
     u_h}_{\Omega_i\setminus\mathcal{T}_{i,h}}
     \norm{\nabla (L_h u -u_h)}_{\Omega_i\setminus\mathcal{T}_{i,h}}.
   \end{align*}
   By Lemma~\ref{region}, we can bound the terms on the convex remainders as
   \begin{align*}
     \norm{\delta\kappa_i\nabla
     u_h}_{\Omega_i\setminus\mathcal{T}_{i,h}} &\le 
     c_\kappa h^{\nicefrac12}\norm{\nabla u_h}_{\Omega},\\
     \norm{\nabla (L_h u -u_h)}_{\Omega_i\setminus\mathcal{T}_{i,h}} &\le 
     ch^{\nicefrac12}\norm{\nabla (L_h u -u_h)}_{\Omega},
   \end{align*} 
   and arrive at
   \begin{align}
\norm{\nabla e_h}^2
\le c\norm{\nabla e_h}\norm{\nabla(u-L_h u)} + c h\norm{\nabla u_h}\norm{\nabla (u_h-L_h u)}.
   \end{align}
   Applying Young's inequality $ab \le \frac{a^2}{2\varepsilon} + \frac{\varepsilon b^2}{2}$
   and adding and subtracting $u$ results in
   \[
   \norm{\nabla e_h}
   \le c\norm{\nabla(u-L_h u)} + c h\norm{\nabla u}.
   \]
   That leaves us to estimate the interpolation error on the whole domain $\Omega.$
   We divide the estimation into the cells which are affected by the interface and those which are not affected. 
   By Lemma~\ref{lemmainterpolation}, we have an estimate for cells $T\in S_h$:
   \[
    \norm{\nabla(L_hu -u)}_{S_h} \le ch\norm{u}_{H^2(\Omega_1\cup\Omega_2)}.
   \]
   On
   those cells not cut by the interface, we use the standard interpolation estimate and extend the
   domain to the complete domain again:
   \begin{align}
   \norm{\nabla (u -L_h u)}_{\Omega}^2 &= \norm{\nabla (u -L_h u)}_{\Omega\setminus S_h}^2 +
   \norm{\nabla (u -L_h u)}_{S_h}^2\nonumber\\
   &\le ch^2 \norm{u}_{H^2(\Omega_1 \cup \Omega_2)}^2.\label{Lh}
 \end{align}
 With the stability estimate, the first estimate
 \begin{equation}
 \norm{\nabla(u-u_h)}_\Omega \le Ch\norm{f}_\Omega\label{energyerror}
 \end{equation}
 is proven.
    \item We prove the second inequality $\norm{u-u_h}\le Ch^2\norm{f}$:\\
 To show the estimate for the $L^2$-error, we apply a standard duality argument. Therefore, let 
 $z\in H^1_0(\Omega)$ be the solution of the adjoint problem
 \[
 \sum_{i=1}^2(\kappa_i\nabla\varphi,\nabla z) = (e_h,\varphi)\norm{e_h}^{-1}
 \]
 for all $\varphi\in H^1_0(\Omega)$. For the dual solution it holds $z\in H^1_0(\Omega) \cup
 H^2(\Omega_1\cup\Omega_2)$ and $\norm{z}_{H^2(\Omega_1\cup\Omega_2)}\le c_s$. Using the perturbed Galerkin orthogonality~\eqref{Galerkin}, we obtain
 \begin{align*}
 \norm{e_h} &= (e_h,e_h)\norm{e_h}^{-1} = (\kappa \nabla e_h, \nabla z)\\
 &= (\kappa \nabla e_h,
 \nabla(z-L_hz)) + \sum_{i=1}^2(\delta\kappa_i\nabla u_h, \nabla
 L_hz)_{\Omega_i\setminus\mathcal{T}_{i,h}},
 \end{align*}
 and we estimate
 \begin{equation}
 \norm{e_h} \le c\norm{\nabla e_h}\norm{\nabla(z-L_hz)} + c\sum_{i=1}^2\norm{\delta\kappa_i\nabla
 u_h}_{\Omega_i\setminus\mathcal{T}_{i,h}}\norm{\nabla L_h
 z}_{\Omega_i\setminus\mathcal{T}_{i,h}}.\label{L2error}
 \end{equation}
 By adding and subtracting $u$ and $L_hu$, we find that
\begin{align*}
  \norm{\nabla u_h}_{\Omega_i\setminus\mathcal{T}_{i,h}} &\le \norm{\nabla
  u}_{\Omega_i\setminus\mathcal{T}_{i,h}} + \norm{\nabla
  (u-L_hu)}_{\Omega_i\setminus\mathcal{T}_{i,h}} + \norm{\nabla (L_h u -
  u_h)}_{\Omega_i\setminus\mathcal{T}_{i,h}}.
\end{align*}
 With Lemma~\ref{region}, it holds
 \[
 \norm{\nabla u}_{\Omega_i\setminus\mathcal{T}_{i,h}}\le ch \norm{u}_{H^2(\Omega_1\cup\Omega_2)},
 \]
 and thus, using Lemma~\ref{region} and~\eqref{Lh}, it follows that
 \[
  \norm{\nabla u_h}_{\Omega_i\setminus\mathcal{T}_{i,h}} 
\le ch\norm{u}_{H^2(\Omega_1\cup\Omega_2)} + ch^{\nicefrac12}\norm{\nabla (L_h u - u_h)}_{\Omega}.
 \]
 After adding and subtracting $u$ again, and employing~\eqref{Lh} and~\eqref{energyerror}, we derive
\begin{align*}
\norm{\nabla u_h}_{\Omega_i\setminus\mathcal{T}_{i,h}} &\le ch\norm{u}_{H^2(\Omega_1\cup\Omega_2)} +
ch^{\nicefrac12}\norm{\nabla ( u - L_hu)}_{\Omega} + ch^{\nicefrac12}\norm{\nabla (e_h)}_{\Omega},\\
&\le ch \norm{f}_\Omega. 
\end{align*}
Similarly, by adding and subtracting the dual solution $z$, we find that for the interpolation of the dual
solution it holds
\[
\norm{\nabla L_h z}_{\Omega_i\setminus\mathcal{T}_{i,h}} \le \norm{\nabla
z}_{\Omega_i\setminus\mathcal{T}_{i,h}} + \norm{\nabla(z-L_hz)}_{\Omega_i\setminus\mathcal{T}_{i,h}}\le
ch\norm{z}_{H^2(\Omega_1\cup\Omega_2)}.
\]
Using~\eqref{Lh},~\eqref{energyerror}, the stability estimate
$\norm{z}_{H^2(\Omega_1\cup\Omega_2)}\le c_s$, and~\eqref{L2error} we
deduce the estimate
\begin{align*}
  \norm{u-u_h}_\Omega 
&\le Ch^2\norm{f}_\Omega.
\end{align*}
\end{enumerate}
\end{proof}

\section{Numerical examples}
\label{numerics}
In this section, we present three different numerical test cases which are chosen to numerically
also show the analytically proven convergence. All test are taken from~\cite{FreiRichter14} and
results can directly be compared. 

We also test the behavior of
the presented method dependent on how we choose the free parameters $r,q,s$ in the cells which are
cut by the interface. Note that only for the case that two edges are cut by the interface, we
proposed different strategies to choose the free parameter. We will show results for three different
choices of parameters, namely:

\begin{description}
  \item[Strategy 1:] Choose the free parameters to be $\nicefrac12$, also if a vertex is
    cut.
  \item[Strategy 2:] Choose the free parameters to be $r = 1-s$, $q = s$, and $s =
    1-r$.
  \item[Strategy 3:] Choose the free parameters to be $r = (1-s)(1-q)$, $q = (1-r)s$, and $s
    = qr$.
\end{description}

All computations in this paper are done with the initial mesh shown in Figure~\ref{initial_mesh}.
\begin{figure}[h]
  \centering
  \begin{subfigure}[b]{.3\textwidth}
  \centering
  \includegraphics[width=\textwidth]{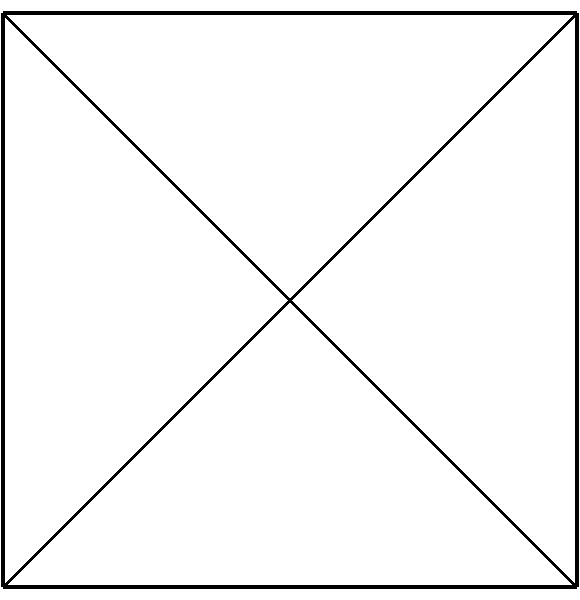}
  \caption{Initial mesh used for all computations.}\label{initial_mesh}
\end{subfigure}%
\hspace{2mm}
  \begin{subfigure}[b]{.66\textwidth}
  \centering
  \includegraphics[width=.95\textwidth]{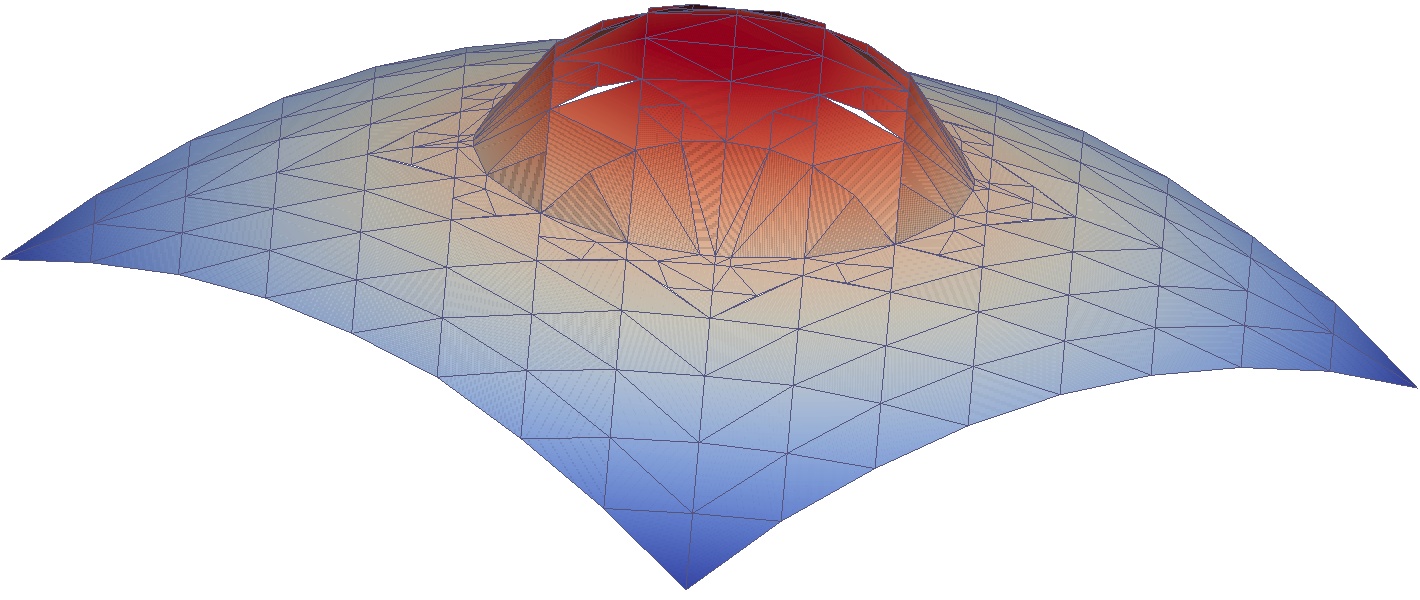}
  \caption{Finite element approximation of~\eqref{circular_u} with the presented method after several global refinements
  of the initial mesh.}\label{solution}
\end{subfigure}%
\caption{Initial mesh~\subref{initial_mesh} and approximated solution for a circular
interface~\subref{solution}.}
\end{figure}

\subsection{Circular interface}\label{circular_interface}
This example was used to compute the results for Figure~\ref{conv} in Section~\ref{intro}.
We choose an analytical solution to the interface problem~\eqref{laplace} as
\begin{equation}
u(x) = \begin{cases} -2 \kappa_2
  \norm{x}^4,& x \in \Omega_1,\\-\kappa_1\norm{x}^2+\frac14\kappa_1-\frac18\kappa_2,& x\in
  \Omega_2\end{cases}\label{circular_u}
\end{equation}
and compute the right hand side and boundary conditions accordingly. The domains are given as
\begin{align*}
  \Omega_1 &= \{x \in \mathds{R}^2: \norm{x}<\nicefrac14\},\\
  \Omega_2 &= (-1,1)^2\setminus \Omega_1,
\end{align*}
and the diffusion coefficient is defined as 
\[
\kappa = \begin{cases} 0.1, &x\in \Omega_1\\ 1, &x\in\Omega_2\end{cases}.
\]
A sketch of a finite element approximation with the locally adapted patch method is given in
Figure~\ref{solution}. We note that in contrast to~\cite{FreiRichter14}, we only split the cut cells into subtriangles for visualization. Thus, there are hanging nodes in the visualization. However, the solution is
continuous at these edges.
For the presented adapted finite element method we plot the error in the $L^2$- and in the
$H^1$-norm for several levels of global refinement of the mesh in Figure~\ref{figoptcon}. We recover
the optimal quadratic convergence for the error in the $L^2$-norm and linear convergence in the
$H^1$-norm as proven in Theorem~\ref{thmestimate}.
\begin{figure}[h]
  \centering
  \resizebox{\textwidth}{!}{
  \input{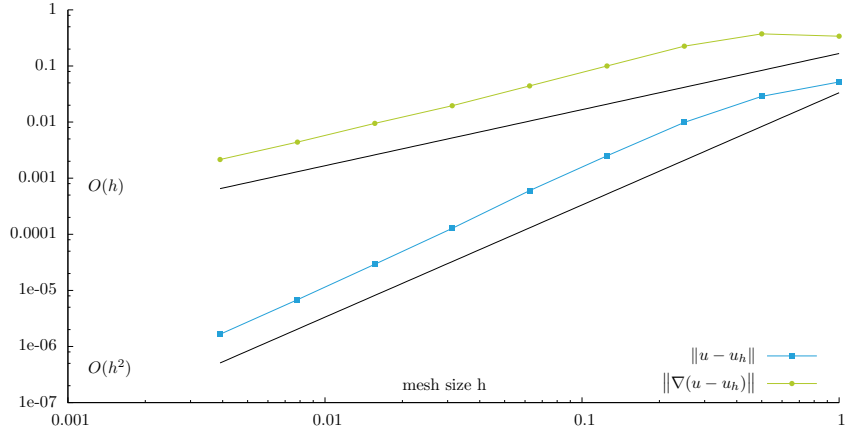}
  }
\caption{Optimal convergence recovered with the adapted finite element method. Solid lines indicate
a slope of $h$ and $h^2$, respectively.}\label{figoptcon}
\end{figure}
In Figure~\ref{different_cuts}, we present three meshes which arise in the approximation
of~\eqref{circular_u}. The subtriangles of the cells which are cut by the interface are shown. In the
mesh in Figure~\ref{no_adj}, the free parameters which are not determined by the cut of the interface
are chosen to be $\nicefrac12$. Therefore, the maximum angle condition could be violated. For the other
two meshes in Figures~\ref{with_adj} and~\ref{with_adj_prod}, we apply two different strategies for
choosing the free parameters. In those meshes, the maximum angle condition will be satisfied.
\begin{figure}[h]
  \centering
  \begin{subfigure}[b]{.3\textwidth}
  \centering
  \includegraphics[width=\textwidth]{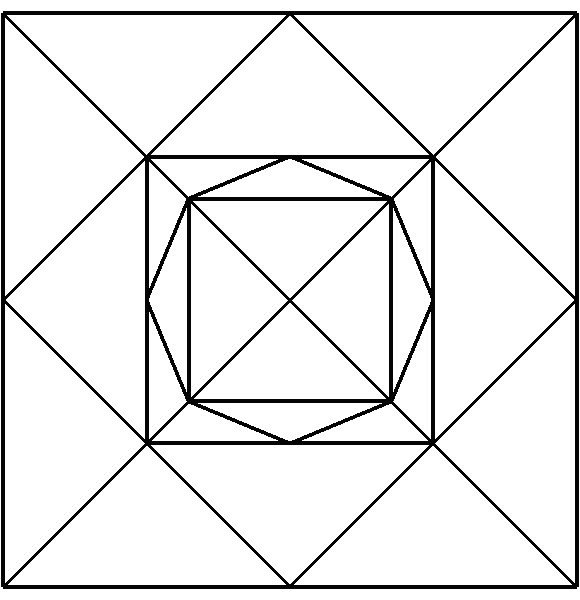}
  \caption{Strategy 1.}\label{no_adj}
\end{subfigure}%
\hspace{2mm}
  \begin{subfigure}[b]{.3\textwidth}
  \centering
  \includegraphics[width=\textwidth]{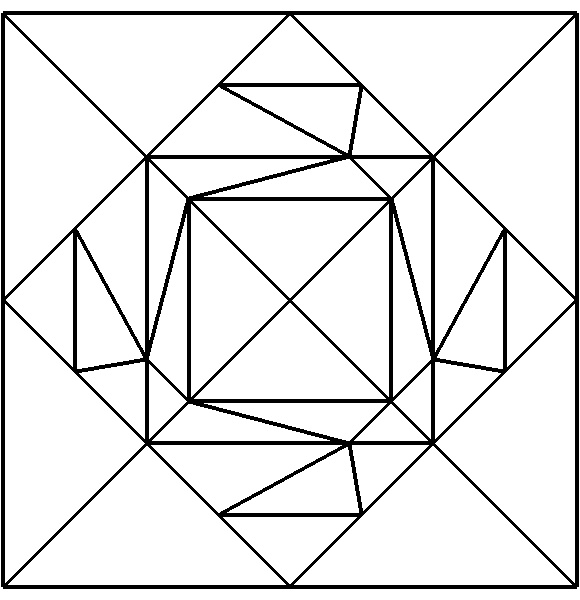}
  \caption{Strategy 2.}\label{with_adj}
\end{subfigure}%
\hspace{2mm}
  \begin{subfigure}[b]{.3\textwidth}
  \centering
  \includegraphics[width=\textwidth]{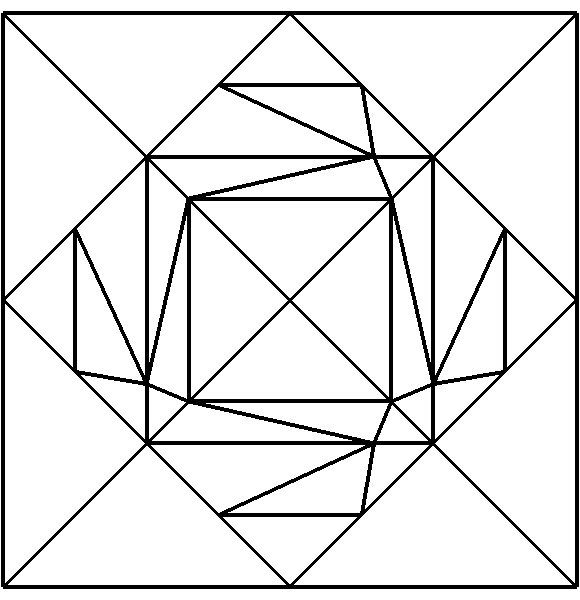}
  \caption{Strategy 3.}\label{with_adj_prod}
\end{subfigure}%
\caption{Cut cells for the approximation of~\eqref{circular_u} after one level of global refinement of the initial mesh in
Figure~\ref{initial_mesh}, free parameters chosen after different strategies.}\label{different_cuts}
\end{figure}
\subsection{Horizontal cut}
In this example we study the behavior of the locally adapted patch finite element method for patch triangles which get
very anisotropic. Let us assume that the domain $\Omega = (-1,1)^2$ is cut horizontally into
\[
\Omega_1(\varepsilon) = \{x\in \Omega \vert x_2 < \varepsilon h\}\quad  \text{ and }\quad
\Omega_2(\varepsilon) = \{x\in \Omega \vert x_2 > \varepsilon h\}, 
\]
where $h$ is the maximum edge length of a triangle $T$.
The analytical solution to the interface problem~\eqref{laplace} is chosen as
\[
u(x) = \begin{cases} \frac{\kappa_2}{\kappa_1} (x_2 -\varepsilon h) - (x_2 -\varepsilon h)^2,& x \in \Omega_1,\\
  (x_2 -\varepsilon h) - (x_2 -\varepsilon h)^2,& x\in
  \Omega_2.\end{cases}
\]
\begin{figure}[h]
  \centering
  \resizebox{\textwidth}{!}{
  \input{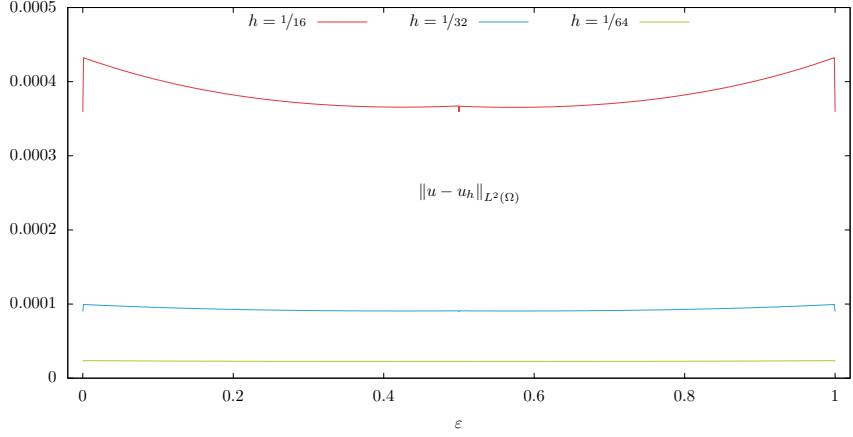}
  }
\caption{Error in the $L^2$-norm error depending on $\varepsilon$, strategy 1.}\label{horizontalL2}
\end{figure}

The right hand side and the boundary conditions are computed accordingly. The vertical edge length
in a patch triangulation will vary due to the value of $\varepsilon$. Vertical edge lengths between
$\varepsilon h$ and $(1-\varepsilon)h$ will occur for $\varepsilon \in [0,1]$. Here, $h$ denotes the
maximum edge length of a triangle $T$.

\begin{figure}[h]
  \centering
  \resizebox{\textwidth}{!}{
  \input{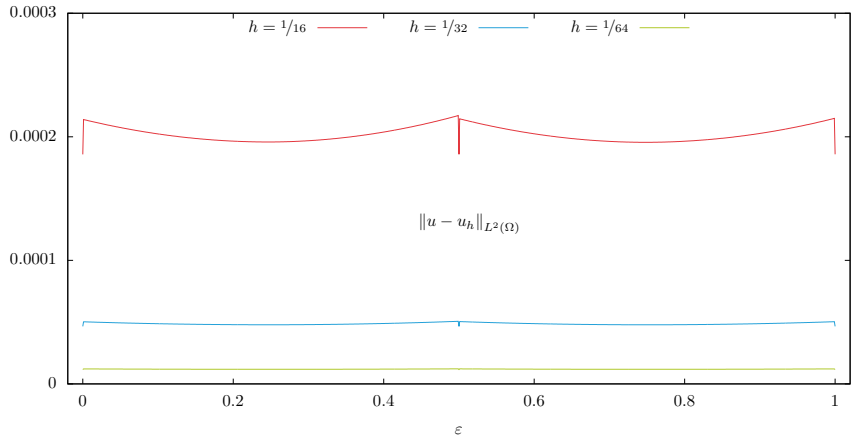}
  }
\caption{Error in the $L^2$-norm depending on $\varepsilon$, strategy 2.}\label{horizontalL2_cross}
\end{figure}

We plot the error in the $L^2$- and $H^1$-norm for different values of $\varepsilon\in[0,1]$ in
Figures~\ref{horizontalL2} to~\ref{horizontalH1}. As expected and already reported
in~\cite{FreiRichter14}, we observe the smallest errors for $\varepsilon = 0$, $\varepsilon =
\nicefrac12$, and $\varepsilon = 1$, as then the cut is resolved by the mesh. 

\begin{figure}[h]
  \centering
  \resizebox{\textwidth}{!}{
  \input{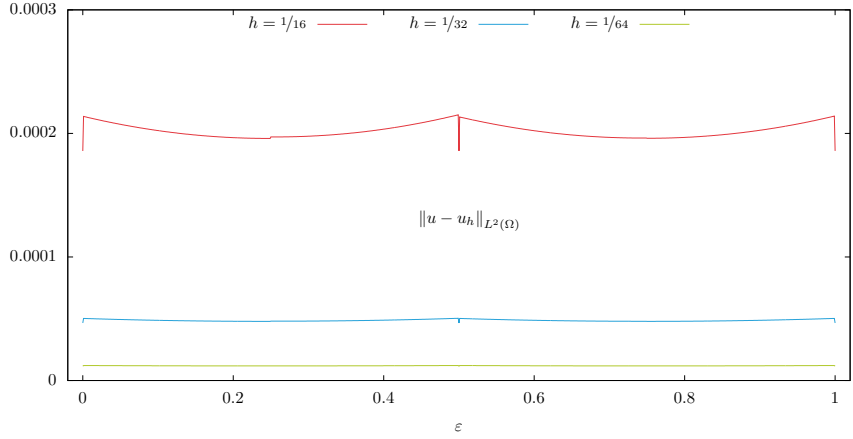}
  }
\caption{Error in the $L^2$-norm depending on $\varepsilon$, strategy 3.}\label{horizontalL2_cross_prod}
\end{figure}

The largest errors
arise for the cases in which $\varepsilon \to 0$ and $\varepsilon \to 1$ since then, the
anisotropies for the patch cells become maximal. However, the errors stay bounded. This behavior is similar on finer meshes but the variations get smaller.

\begin{figure}[h]
  \centering
  \resizebox{\textwidth}{!}{
  \input{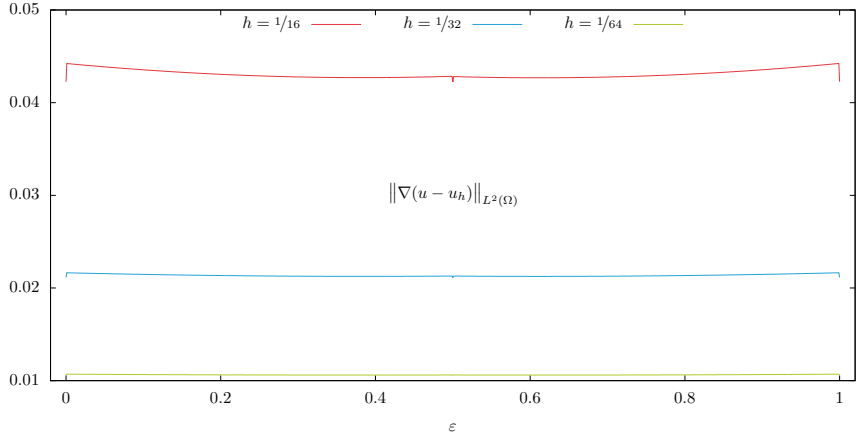}
  }
\caption{Error in the $H^1$-norm depending on $\varepsilon$, strategy 1.}
\end{figure}

\begin{figure}[h]
  \centering
  \resizebox{\textwidth}{!}{
  \input{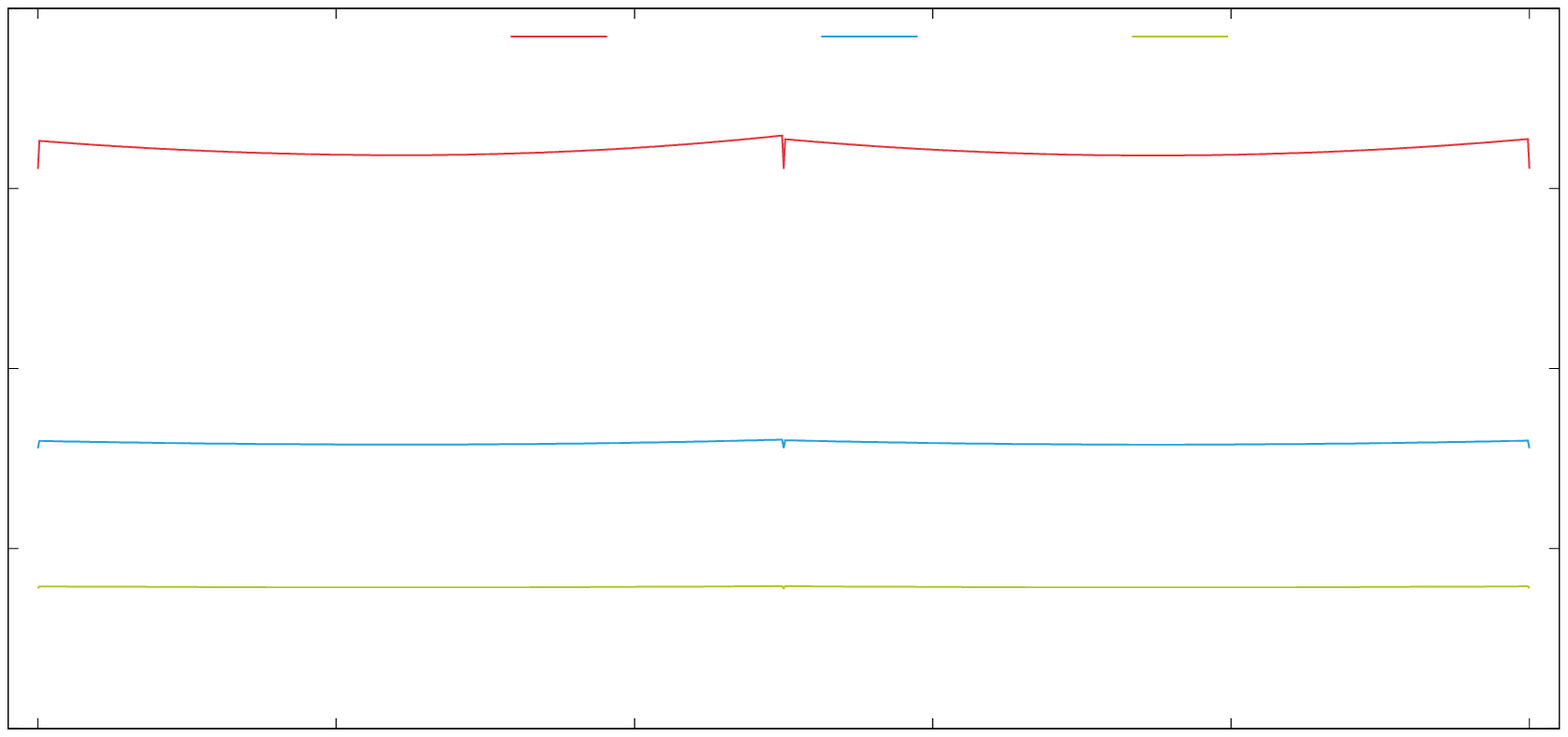}
  }
\caption{Error in the $H^1$-norm error depending on $\varepsilon$, strategy 2.}\label{horizontalH1_cross}
\end{figure}

\begin{figure}[h]
  \centering
  \resizebox{\textwidth}{!}{
  \input{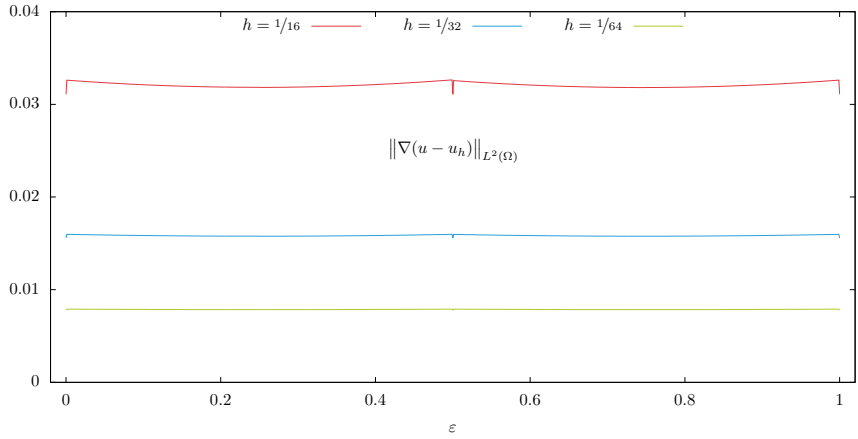}
  }
\caption{Error in the $H^1$-norm depending on $\varepsilon$, strategy 3.}\label{horizontalH1}
\end{figure}

The errors are smaller if we choose the free parameters such that the maximum angle condition will
be satisfied. However, the error behaves quite similar for all three strategies to choose the free
parameters. There is no significant difference between the error in the $L^2$-norm shown in
Figures~\ref{horizontalL2_cross} and~\ref{horizontalL2_cross_prod} and the error in the $H^1$-norm shown in 
Figures~\ref{horizontalH1_cross} and~\ref{horizontalH1}. For this example, we conclude that it
matters to choose the free parameter to satisfy a maximum angle condition. However, it does not seem to matter which of the suggested values to choose.

\subsection{Tilted interface line}
In this numerical example, we consider a straight interface which cuts the domain $\Omega = (-1,1)^2$ in two
subdomains
\[
\Omega_1(\alpha) = \{x\in\Omega\vert \cos(\alpha)x_2<\sin(\alpha)x_1\},
\]
and
\[
\Omega_2(\alpha) = \{x\in\Omega\vert \cos(\alpha)x_2>\sin(\alpha)x_1\}.
\]
Depending on the value of $\alpha$, the straight interface is defined by
$\cos(\alpha)x_2=\sin(\alpha)x_1$. We choose an analytical solution to problem~\eqref{laplace} as
\[
u(x) = \begin{cases} \sin\big(\frac{\kappa_2}{\kappa_1}(\cos(\alpha)x_2-\sin(\alpha)x_1)\big), & x \in
  \Omega_1,\\
\sin(\cos(\alpha)x_2-\sin(\alpha)x_1), & x \in \Omega_2.
\end{cases}
\]
From this exact solution, the right hand side and boundary conditions are computed accordingly.
We plot the error in the $L^2$- and $H^1$-norm for different values of $\alpha\in[0,\pi]$ in
Figures~\ref{tiltedL2} to~\ref{tiltedH1}.
\begin{figure}[h]
  \centering
  \resizebox{\textwidth}{!}{
  \input{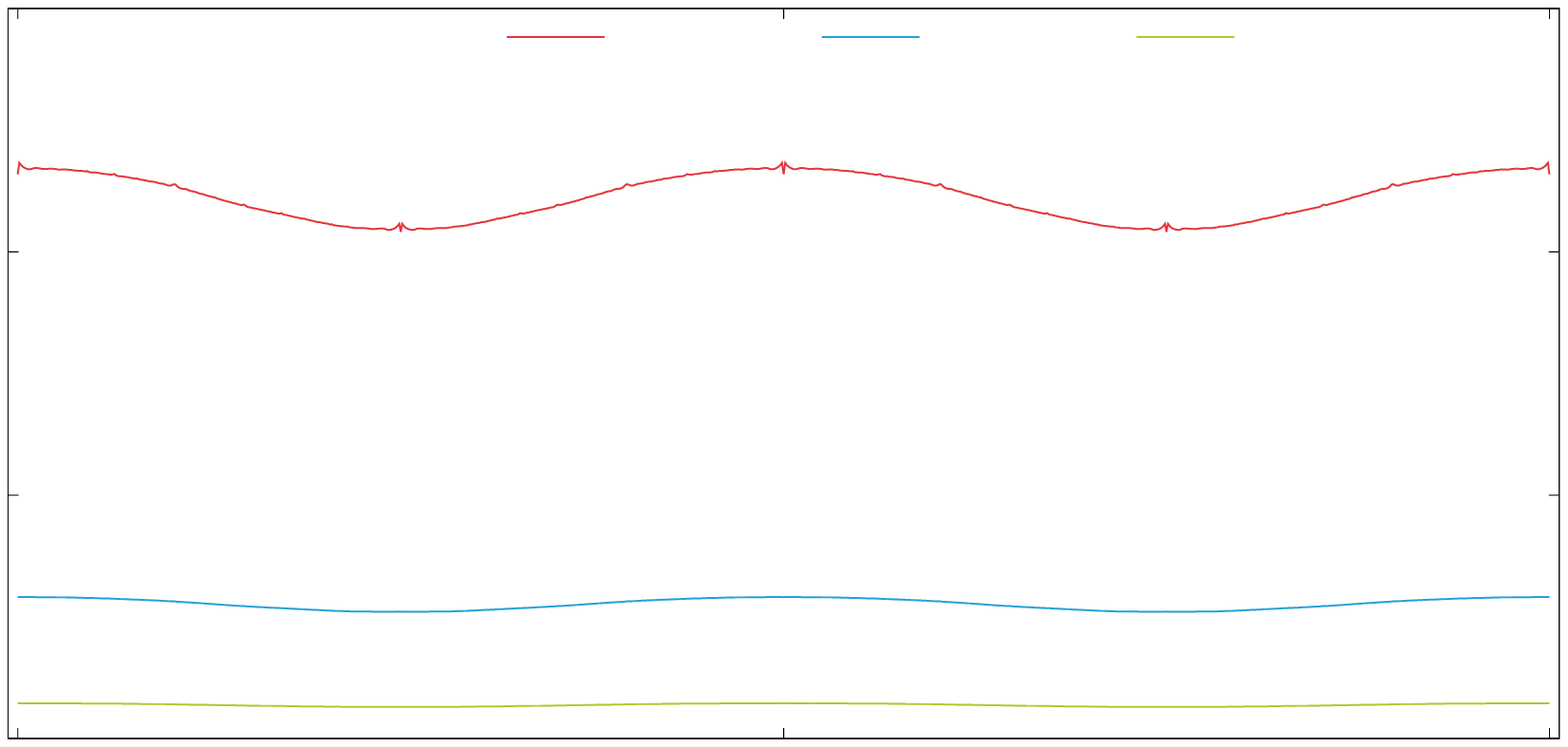}
  }
\caption{Error in the $L^2$-norm depending on $\alpha$, strategy 1.}\label{tiltedL2}
\end{figure}
\begin{figure}[h]
  \centering
  \resizebox{\textwidth}{!}{
  \input{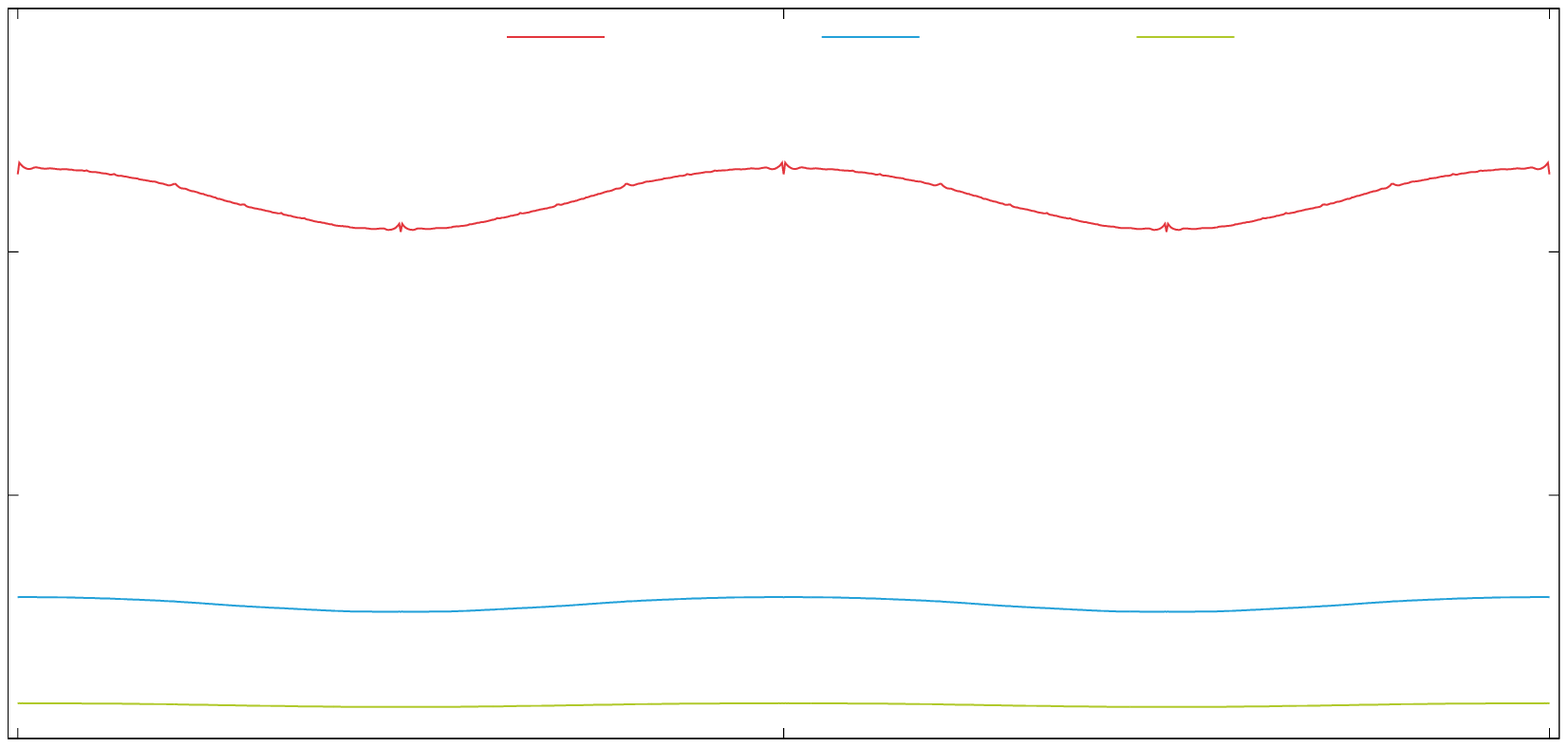}
  }
\caption{Error in the $L^2$-norm depending on $\alpha$, strategy 2.}
\end{figure}
\begin{figure}[h]
  \centering
  \resizebox{\textwidth}{!}{
  \input{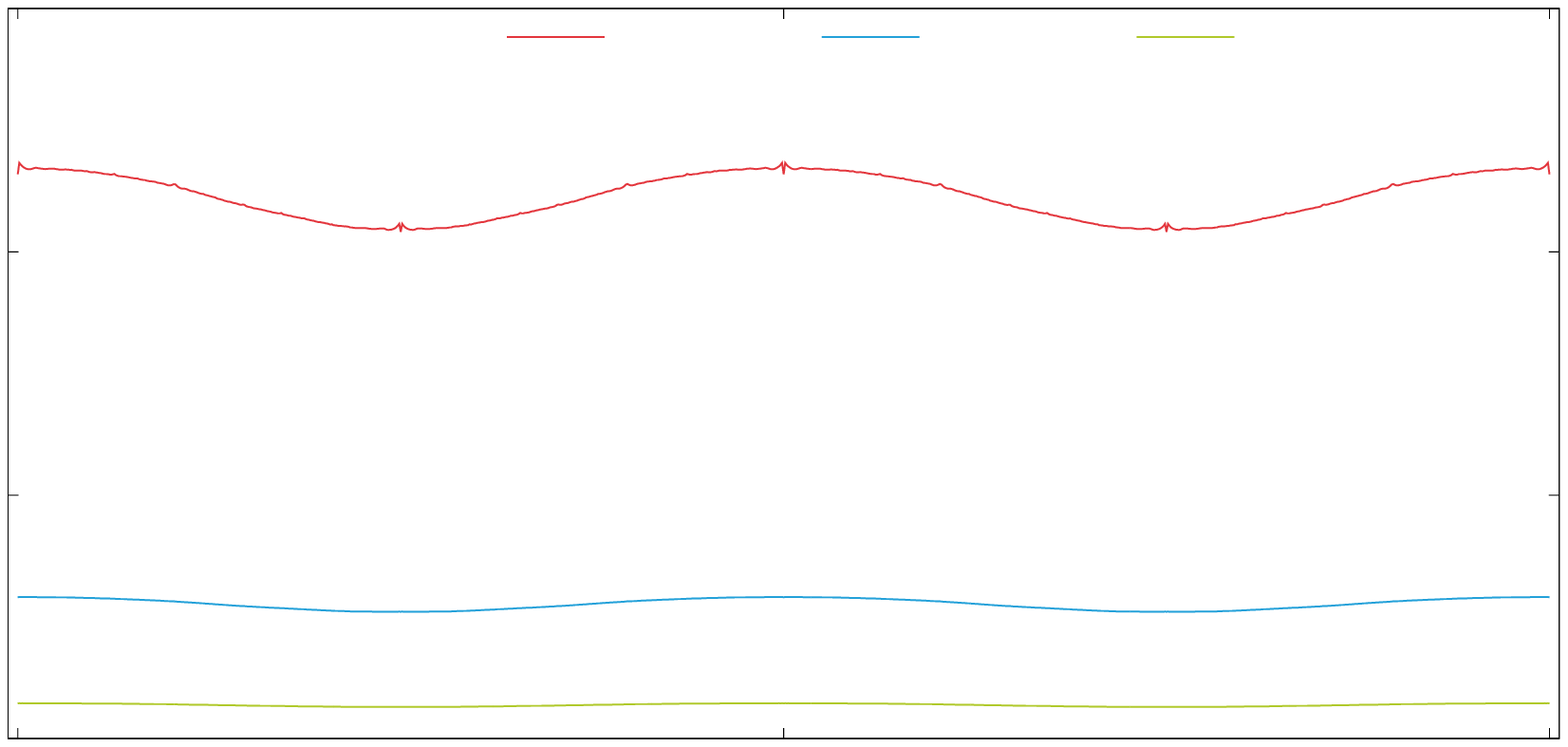}
  }
\caption{Error in the $L^2$-norm depending on $\alpha$, strategy 3.}
\end{figure}

\begin{figure}[h]
  \centering
  \resizebox{\textwidth}{!}{
  \input{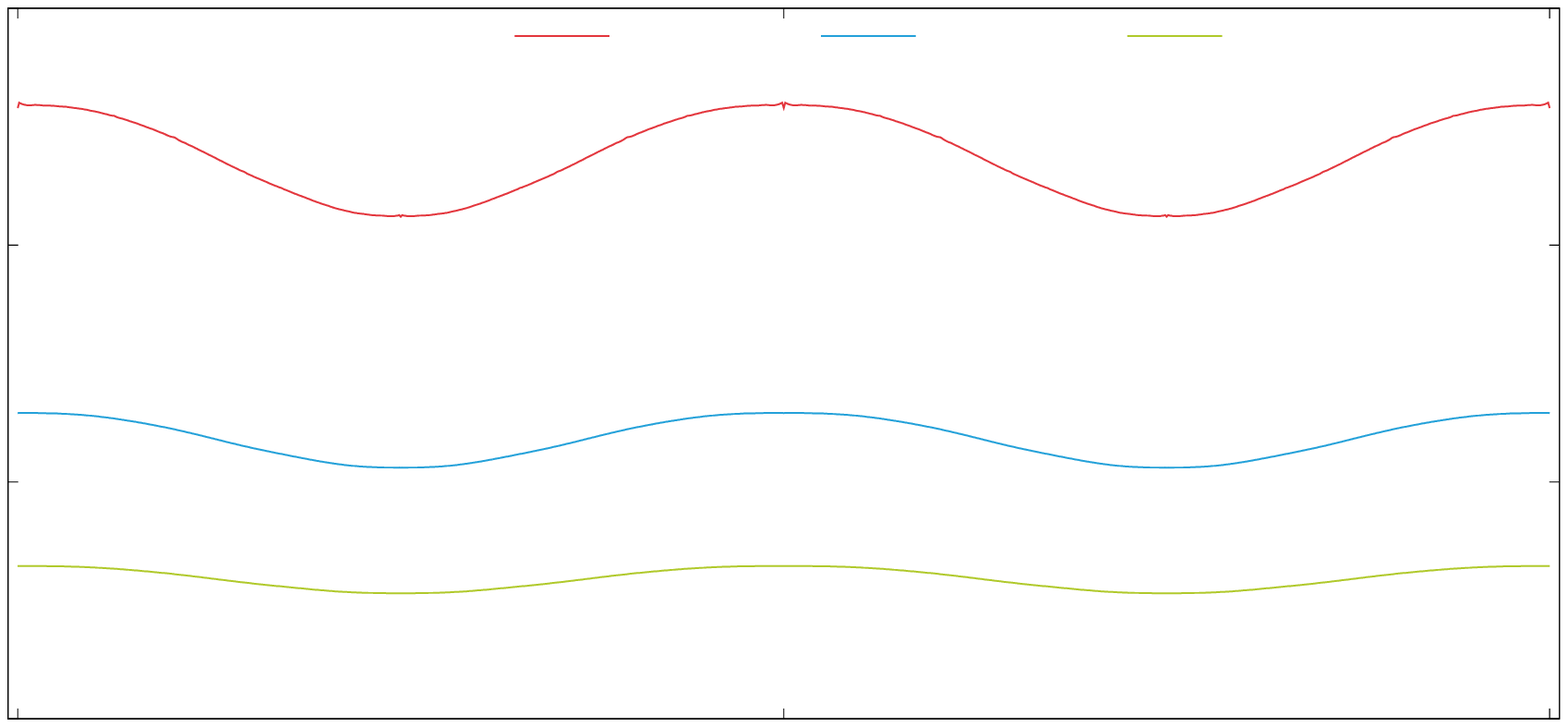}
  }
\caption{Error in the $H^1$-norm depending on $\alpha$, strategy 1.}
\end{figure}
\begin{figure}[h]
  \centering
  \resizebox{\textwidth}{!}{
  \input{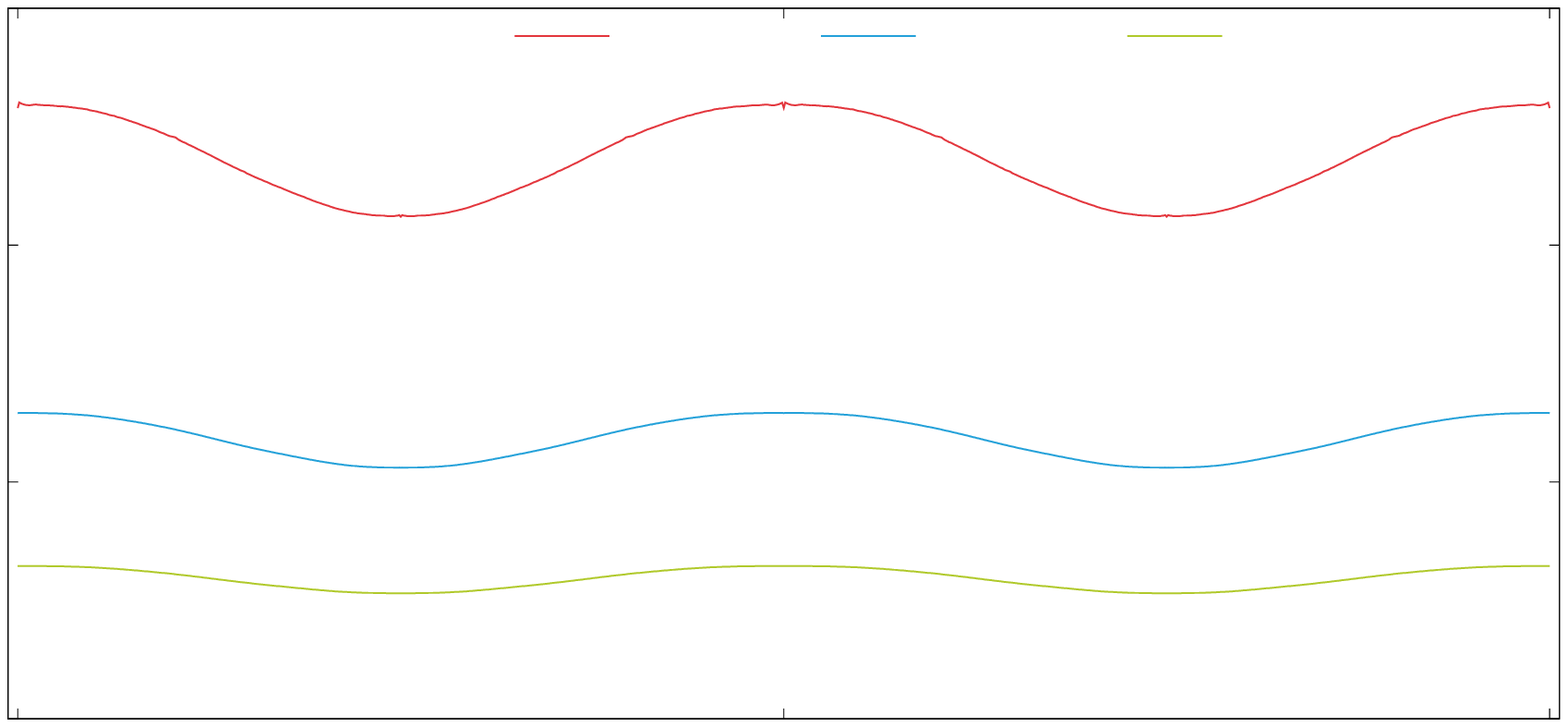}
  }
\caption{Error in the $H^1$-norm depending on $\alpha$, strategy 2.}
\end{figure}

\begin{figure}[h]
  \centering
  \resizebox{\textwidth}{!}{
  \input{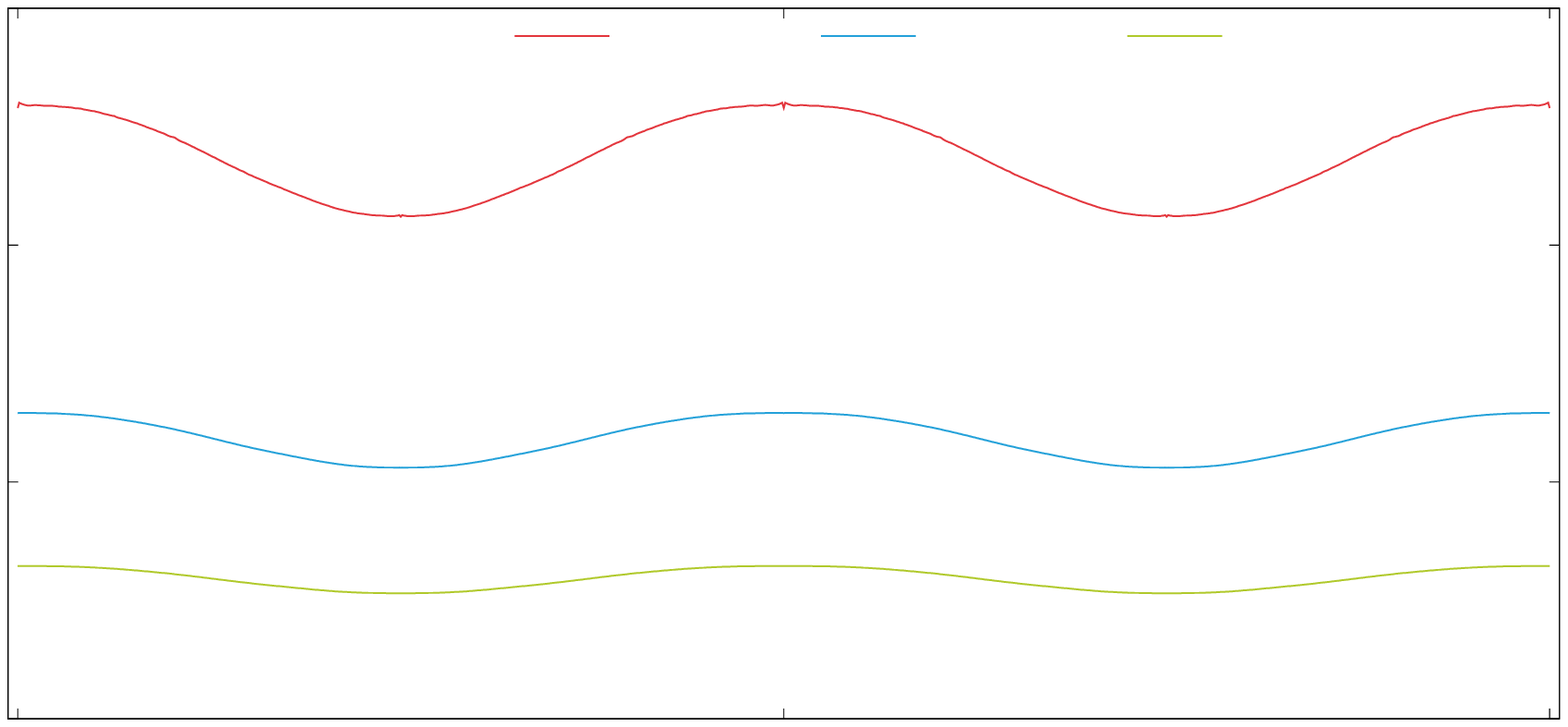}
  }
\caption{Error in the $H^1$-norm depending on $\alpha$, strategy 3.}\label{tiltedH1}
\end{figure}
In this example, the choice of the free parameters does not make any difference. In contrast to the
previous example we even get the same behavior of the error if we choose the free parameters to be
$\nicefrac12$. 
\section{Conclusions}
\label{conclusion}
We presented an extension of the locally adapted patch finite element method to triangles. In order
to resolve a solution at an interface, patch triangles which are cut by the interface are divided
into subtriangles. Linear polynomials on these subtriangles together with a local quadrature rule
are applied. 

Thanks to linear transformations of the subtriangles to the exact location of the cut in the patch
triangle, no degrees of freedom have to be moved, inserted or deleted. Thus, this method is
perfectly suitable for parallel computing on distributed architectures. 

A key ingredient for the a priori error analysis of the locally adapted finite element patch method was the maximum angle condition. In contrast to the
locally adapted patch finite element method on quadrilaterals, the maximum angle condition is not
automatically satisfied on triangular meshes. We suggested various possibilities to overcome this
shortcoming. With these adjustments, the optimal order of convergence for the error in the
$L^2$- and the $H^1$-norm are recovered. This is also shown in the numerical experiments.
Furthermore, we observe in the numerical tests that these
adjustments are not necessarily leading to a better behavior for the error.   

In future research, we plan to parallelize the method and to extend it to three dimensional
tetrahedral meshes. Furthermore, we would like to apply our method to time-dependent and more complex
problems such as fluid-structure interaction problems.

\section*{Acknowledgement}
The authors would like to thank Aur\'elien Larcher for the implementation of higher
order finite elements in the HPC branch of DOLFIN. Without his contribution, this work would not have been possible.

\bibliographystyle{abbrv}
\bibliography{paper}

\end{document}